\numberwithin{equation}{section}
\newif\ifdraft\drafttrue
\font\sb = cmbx8 scaled \magstep0
\font\sn = cmssi8 scaled \magstep0
\font\si = cmti8 scaled \magstep0
\long\def\commargin#1{\ifdraft{\marginpar{\si #1}}\else\ignorespaces\fi}
\long\def\commarginnew#1{\ifdraft{\marginpar{\sb #1}}\else\ignorespaces\fi}
\newcommand\A{
{a}}
\newcommand\B{
{b}}
\newcommand\mr{M_{m,n}}
\newcommand\amr{$Y\in M_{m,n}
$}
\newcommand\ba{badly approximable}
\newcommand\da{Diophantine approximation}
\newcommand\di{Diophantine}
\newcommand\hs{homogeneous space}
\newcommand\stl{strongly tree-like}
\newcommand\ssm{\smallsetminus}
\newcommand\name[1]{\label{#1}{\ifdraft{\sn [#1]}\else\ignorespaces\fi}}
\newcommand\bname[1]{{\ifdraft{\sn [#1]}\else\ignorespaces\fi}}
\newcommand\eq[2]{{\ifdraft{\ \tt [#1]}\else\ignorespaces\fi}\begin{equation}\label{eq:#1}{#2}\end{equation}}
\newcommand {\equ}[1]     {\eqref{eq:#1}}
\newcommand{\under}[2]{\underset{\text{#1}}{#2}}
\newcommand{\goth}[1]{{\mathfrak{#1}}}
\newcommand{\R}{{\mathbb{R}}}
\newcommand{\Z}{{\mathbb{Z}}}
\newcommand{\C}{{\mathbb{C}}}
\newcommand{\E}{{\mathbf{e}}}
\newcommand{\N}{{\mathbb{N}}}
\newcommand{\vs}{{\bf{s}}}
\newcommand{\Int}{\operatorname{Int}}
\newcommand{\Tr}{\operatorname{Tr}}
\newcommand{\SL}{\operatorname{SL}}
\newcommand{\ggm}{G/\Gamma}
\newcommand{\diag}{{\rm diag}}
\newcommand{\End}{{\rm End}}
\newcommand {\ignore}[1]  {}
\newcommand{\spa}{{\rm span}}
\newcommand{\diam}{{\rm diam}}
\newcommand{\df}{{\, \stackrel{\mathrm{def}}{=}\, }}
\newcommand{\FF}{{\mathcal{F}}}
\newcommand{\x}{{\mathbf{x}}}
\newcommand{\vy}{{\bf y}}
\newcommand{\vr}{{\bf r}}
\newcommand{\vn}{{\bf n}}
\newcommand{\p}{{\bf p}}
\newcommand{\vp}{{\bf p}}
\newcommand{\vq}{{\bf q}}
\newcommand{\til}{\widetilde}
\newcommand{\supp}{{\rm supp}}
\newcommand{\BA}{{\bold{Bad}}}
\newcommand{\sm}{\smallsetminus}
\newcommand{\vre}{\varepsilon}
\newcommand\hd{Hausdorff dimension}
\newcommand\nz{\smallsetminus \{0\}}
\newcommand{\ay}{{\bf{A}_\infty}}
\newcommand{\ca}{{\mathcal A}}
\newcommand{\cd}{{\mathcal D}}
\newtheorem{thm}{Theorem}[section]
\newtheorem{lem}[thm]{Lemma}
\newtheorem{prop}[thm]{Proposition}
\newtheorem{cor}[thm]{Corollary}
\title[Modified Schmidt games]{Modified Schmidt games 
 and Diophantine approximation with weights}
\author{Dmitry Kleinbock}
\address{Brandeis University, Waltham MA
02454-9110 {\tt kleinboc@brandeis.edu}}
\author{Barak Weiss}
\address{Ben Gurion University, Be'er Sheva, Israel 84105
{\tt barakw@math.bgu.ac.il}}
\keywords{Diophantine approximation, badly approximable vectors, Schmidt's game, Hausdorff dimension}
\subjclass{11J13; 11J83}
\date{
August 2009}
\begin{document}

 \begin{abstract} We 
show
 that the sets of weighted badly approximable vectors in $\R^n$ are winning sets of certain games, which are modifications of $(\alpha,\beta)$-games introduced by W.\ Schmidt in 1966. The latter winning property  is stable with respect to countable intersections, and is shown to imply full \hd. 

\end{abstract}
\maketitle

\commargin{removed `simple argument'}
\section{Introduction} \name{intro} A classical result of Dirichlet \commargin{The $\to$ A}
states that for any $\x \in \R^n$  there are infinitely many $q\in\N$
such that $\|q \x - \p \| < 
{q^{-1/n}}$ for some $\p\in\Z^n$. 
One says that $\x \in \R^n$ is {\sl badly approximable\/} if the right hand side of the 
above inequality cannot be improved by an arbitrary positive constant. In other words,
if there is
$c>0$ such that for any $\p \in \Z^n, \, q \in \N$ one has
\begin{equation}
\label{eq: defn ba}
\|q \x - \p \| \geq \frac{c}{q^{1/n}}\,.
\end{equation}
Here $\|\cdot\|$ can be any norm on $\R^n$, which unless otherwise
specified
will be chosen to be the supremum norm. 
We denote the set of all badly approximable vectors in $\R^n$ by
$\BA_n$, or $\BA$ if the dimension is clear from the context.
\commargin{Introduced thickness as in the big paper, to
get rid of dimension at any point}  It is
well known that Lebesgue measure of $\BA$ is zero; but 
nevertheless this set is quite large. Namely it is {\sl thick\/}, that is,
 its intersection with every open set in $\R^n$ has full \hd\
(Jarnik \cite{Jarnik} for $n = 1$, 
\commargin{added reference to \cite{Schmidt:book}} Schmidt
\cite{Schmidt games, Schmidt:book} for $n > 1$). 
 In fact Schmidt established a stronger property of the set $\BA$: that it is a 
 so-called winning set
 for a certain game which he invented for that occasion, see \S\ref{sec: games} for 
 more detail. In particular, the latter
 property implies that for any countable sequence of 
 similitudes (compositions of translations and
homotheties)
${f}_i:\R^n\to\R^n$, the intersection $\cap_i {f}_i(\BA)$
is thick as well.  

Our purpose in this paper is to introduce a modification
of Scmidt's game, and apply it to 
similarly study a weighted generalization of the notion of \ba\
vectors. Take a vector $\vr = (r_i \mid 1\le i\le n)$ such that 
\eq{defn r}{
r_i > 0\quad\text{and}\quad\sum_{i=1}^m r_i = 1\,,
}
thinking of each $r_i$ as of a weight assigned to $x_i$. It is easy to show that the following
multiparameter version of the aforementioned Dirichlet's result holds: 
 for $\vr$ as above and any $\x  = (x_1,\dots,x_n)\in \R^n$  there are infinitely
many $q\in\N$ such that \eq{rba}{
\max_{1\le i \le n}|q x_i- p_i|^{1/r_i} < 
{q^{-1}} \text{ for some }\p  = (p_1,\dots,p_n)\in\Z^n\,.}
 This motivates the following
definition: say that $\x$ is {\sl $\vr$-badly approximable\/} if the right hand side of \equ{rba} cannot be improved by an arbitrary positive constant; in other words,
if there is
$c>0$ such that for any $\p \in \Z^n, \, q \in \N$ one has
\eq{defn rba}
{\max_{1\le i \le n}|q x_i- p_i|^{1/r_i} \geq \frac{c}{q}\,.
}
Following \cite{PV-bad} and \cite{KTV},  denote by $\BA(\vr)$ the
set of $\vr$-badly approximable vectors. 
It is not hard to make sense of the above definition when one or more of the components of 
$\vr$ are equal to zero: one simply needs to ignore these components following a convention
 $a^{\infty} = 0$ when $0 \le a < 1$. For example,  $\BA(1,0) = \BA_1
\times \R$ and  $\BA(0,1) = \R\times \BA_1$. Also it is clear that
$\BA_n  = \BA(\vn)$ where \eq{def n}{\vn = (1/n,\dots,1/n)\,.} 
 
One of the main results of 
\cite{PV-bad} states that the set
$\BA(\vr)$
is thick for any $\vr$ as above (this was conjectured
earlier in \cite{K-india}). A complete proof is given in \cite{PV-bad} for the
case $n = 2$, but the method, based on 
some ideas of Davenport, straightforwardly extends
to higher dimensions as noted by the authors of  \cite{PV-bad}. A slightly different
 proof can be found in  \cite{KTV}. In this paper we present 
a modification (in our opinion, a simplification)
of the argument from the aforementioned papers which yields a stronger result. 
\commarginnew{added more intro material}
Namely, in \S\S \ref{sec: games}--\ref{sec: contr} we describe a variation of Schmidt's game, 
which we call {\sl modified Schmidt game\/} (to be abbreviated by MSG)
induced by a family of contracting automorphisms of $\R^n$, and study properties of winning
sets of those modified games. We show that   winning sets of MSGs are thick
(Corollary \ref{cor: msg3}),
 and a countable
intersection of sets winning for the same game is winning as well
(Theorem \ref{thm: countable general}). In \S\ref{sec: win}
we prove

\begin{thm}\name{thm: main}
Let  $\vr$ be as in \equ{defn r}, and let $\mathcal{{F}}^{(\vr)} =
\{{\Phi}^{(\vr)}_t: t > 0\}$ be the one-parameter 
semigroup of linear contractions of $\R^n$ defined by 
\eq{defn ar}{{\Phi}^{(\vr)}_t = \diag(e^{-(1+r_1)t},\dots,e^{-(1+r_n)t)})\,.}
Then the set $\,\BA(\vr)$
is a winning set for the modified Schmidt game (to be abbreviated by MSG)
induced by $\mathcal{{F}}^{(\vr)}$; in particular, it is thick.
\end{thm}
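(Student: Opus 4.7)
My plan is to adapt Schmidt's original proof that $\BA_n$ is winning for his $(\alpha,\beta)$-game to the anisotropic geometry encoded by $\mathcal{F}^{(\vr)}$. The geometric keystone is a scale-matching observation: for each rational $\vp/q$ with $\vp\in\Z^n$, $q\in\N$, and each chosen constant $c>0$, the set of $\x$ violating \equ{defn rba} at $(\vp,q)$ is a rectangular box $R_{\vp,q}(c)$ centered at $\vp/q$ whose half-side in coordinate $i$ equals $c^{r_i}/q^{1+r_i}$. Writing $\tau=\log q$, these sides factor as $c^{r_i}\cdot e^{-(1+r_i)\tau}$, so up to a fixed $c$-dependent anisotropic rescaling the shape of $R_{\vp,q}(c)$ is exactly the image of a standard box under $\Phi^{(\vr)}_\tau$. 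Since $\BA(\vr)\supseteq\R^n\setminus\bigcup_{\vp,q}R_{\vp,q}(c)$ for any fixed $c$, it suffices to produce an Alice strategy in the MSG that avoids every $R_{\vp,q}(c)$.

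At round $k$ of the MSG induced by $\mathcal{F}^{(\vr)}$, Bob's set $B_k$ is (a translate of) $\Phi^{(\vr)}_{t_k}$ applied to a reference domain, with $t_k\to\infty$ linearly in $k$. I would have Alice focus at round $k$ on the rationals $\vp/q$ with $\log q$ lying in a narrow window centered at $t_k$: rationals with much smaller $q$ have danger boxes so large that they were already disposed of in earlier rounds (via the nesting $B_k\subseteq B_{k-1}$ and an appropriate choice of $c$), while rationals with much larger $q$ have danger boxes too small to threaten Alice's next move and will be handled in subsequent rounds as $t_k$ grows.

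The main obstacle, and the technical heart of the proof, is a Davenport-style counting lemma: there is a constant $N$, independent of $k$, such that at most $N$ pairs $(\vp,q)$ with $\log q$ in the window satisfy $R_{\vp,q}(c)\cap B_k\ne\vrn$. The shape match of the first paragraph reduces this to the statement that rationals with denominators in a bounded multiplicative range are well separated in the anisotropic metric defined by $\Phi^{(\vr)}$, which follows from a volume/pigeonhole argument exploiting $\sum r_i=1$. Given $N$, we tune $c$ and the MSG parameters so that a single $\Phi^{(\vr)}$-contraction inside $B_k$ leaves Alice enough room to sidestep all $\le N$ offending danger boxes simultaneously.

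Iterating, the unique point of $\bigcap_k B_k$ lies outside every $R_{\vp,q}(c)$, so Alice wins with target set $\BA(\vr)$; thickness then follows from Corollary \ref{cor: msg3}. The whole scheme rests on the calibration identified in the first paragraph, which is in turn built into the specific exponents $1+r_i$ appearing in \equ{defn ar}: any other one-parameter semigroup would destroy the match between the anisotropy of $R_{\vp,q}(c)$ and that of $B_k$, and the counting lemma would fail.
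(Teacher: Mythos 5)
Your high-level architecture — danger boxes $R_{\vp,q}(c)$ shaped like $\Phi^{(\vr)}_{\log q}$ of a fixed cube, with a $\log q$ window near $t_k$ handled at round $k$, and the observation that the specific exponents $1+r_i$ are what make the anisotropy of the danger boxes match that of Bob's sets — is exactly the right frame, and matches the geometry underlying the paper's Proposition \ref{prop: induction}. However, the technical heart you propose, a ``Davenport-style counting lemma'' asserting a \emph{bounded number} $N$ of dangerous rationals independent of $k$, is not what Davenport's argument gives, and I don't see how you would prove it. The simplex lemma (Lemma \ref{lem: simplex}, applied in the paper) is a \emph{structural} statement, not a counting one: if the relevant box has volume $< J/(n!\,N^{n+1})$, then \emph{all} rationals $\vp/q$ with $q<N$ whose images land in the box lie on a single affine hyperplane $\mathcal{L}$. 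Nothing bounds their number; a rational hyperplane of small height can carry many rationals, and the estimate you'd need to rule this out is not a pigeonhole consequence of $\sum r_i = 1$. The uniform bound $N$ holds trivially in dimension $n=1$ (where the separation $|p_1/q_1 - p_2/q_2| \ge 1/(q_1 q_2)$ already forces at most one dangerous rational into a box of length $\sim q^{-2}$), but for $n \ge 2$ that mechanism fails coordinate-by-coordinate and the simplex lemma is precisely the replacement.

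Beyond the missing counting lemma, your final step (``sidestep all $\le N$ offending danger boxes simultaneously'') is also more delicate than you acknowledge. A single contraction by $\Phi^{(\vr)}_a$ shrinks each side of $B_k$ by a fixed factor $e^{-a(1+r_i)}$; you would need to show that for any configuration of $N$ boxes of comparable size inside $B_k$, some allowed translate of $\Phi^{(\vr)}_{t_k+a}(D_0)$ misses all of them, and this is not automatic when $N>1$ (the factor $e^{-a(1+r_i)} < 1/2$ guarantees room to dodge a single hyperplane, not an unbounded constellation of boxes). The paper avoids this issue entirely: since all offending centers lie on a hyperplane $\mathcal{L}$, Alice simply picks the translate farthest from $\mathcal{L}$, and the condition $a > \max_i \frac{\log 2}{1+r_i}$ guarantees that every point of $A_k$ is then at distance $\ge (\frac12 - e^{-a(1+r_i)})\rho_i$ from $\mathcal{L}$ in some coordinate. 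I recommend replacing your counting lemma by the simplex lemma and running the hyperplane-avoidance argument; then your windowing and scale-matching observations slot in essentially unchanged.
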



Note that the original
Schmidt's game can be viewed as a MSG induced by \commarginnew{rearranged this part}
the family of homotheties of $\R^n$; thus Schmidt's theorem on 
$\BA$ being a winning set is a special case of Theorem \ref{thm: main}.
The countable intersection property of winning sets of MSGs  makes it possible to 
intersect $\BA(\vr)$ with its countably many
dilates and 
translates  (see a remark after Theorem \ref{thm: precise}),
as well as establish,
in a simpler way, another result of \cite{PV-bad},
namely that the set \eq{tripleint}{\BA(r_1,r_2) \cap \BA(1,0)  \cap \BA(0,1)} is thick 
for any 
$0 < r_1,r_2 < 1$ with $r_1 + r_2 = 1$. This and other  concluding remarks are made in \S\ref{sec: next}.

\smallskip

{\bf Acknowledgements:}
The authors are grateful to the hospitality of Tata Institute of Fundamental Research
\commargin{removed info about Dani's conference, kind of irrelevant}
(Mumbai)  where they 
had several conversations which eventually led to results described in this paper.
Thanks are also due \commarginnew{more thanks added}
 to Elon Lindenstrauss for motivating discussions, to the referee
for useful comments, 
 and to Max Planck Institute for Mathematics
(Bonn) where the paper was completed.
This work was supported by BSF grant 2000247, ISF grant 584/04, and NSF
Grants DMS-0239463, DMS-0801064.


\section{Modified Schmidt Games}\name{sec: games} \subsection{Schmidt's game}\name{special}
Let $(E,d)$  be a complete metric space,
and let $\Omega \df E\times \R_+$ (the set of formal balls in $E$). Following  
\cite{Schmidt games}, define a 
partial ordering (Schmidt's containment) on $\Omega$ as follows:					
\eq{cont} 									
{(x',r') \le_s (x,r)\quad \iff \quad d(x',x) + r' \leq r\,. 							
}
To each 
 pair $(x,r)\in\Omega$ we associate a closed ball in $E$ via the `ball' function $B$:
\commargin{changed notation so that $B(\cdot)$ stands for closed balls now}
$ B(x,r) \df \{y\in E : d(x,y) \le r\}$. 
Note that $(x', r') \leq_s (x,r)$ implies $B(x', r') \subset B(x,r)$;
while in Euclidean space these conditions are in fact equivalent, in a
general metric space the converse need not hold.


Now pick $0 < \alpha,\beta < 1$  and consider  
the following game, commonly referred to as {\sl Schmidt's game\/}, played by two players, whom we will call\footnote{Schmidt originally named his players `white' and  `black'; in the subsequent literature
letters $A$ and $B$ were often used instead. We are grateful to Andrei Zelevinsky for suggesting the
Alice/Bob  nomenclature following a convention common in computer
science.}
Alice and Bob. \commarginnew{Renamed the players and added a footnote}
The game starts with Bob  choosing $x_1\in E$ and $r > 0$, 
hence specifying a pair $\omega_1 \df (x_1,r)$. 
Alice  may now choose any point $x_1'\in E$ provided that 
$\omega_1'\df (x_1',\alpha r)\le_s\omega_1$. 
Next,  Bob chooses a point $x_2\in E$ such that
$\omega_2\df(x_2,\alpha \beta r)\le_s \omega_1'$, and so on. 
Continuing in the same manner, one obtains a nested sequence of balls in $E$:
$$
B(\omega_1) \supset B(\omega_1') \supset B(\omega_2) \supset B(\omega_2') \supset \ldots\supset 
 B(\omega_k) \supset  B(\omega_k') \supset \ldots
$$
A subset $S$ of $E$ is called {\sl $(\alpha,\beta)$-winning \/} if Alice
can play in such a way that the unique point of intersection 
\eq{int}{
\bigcap_{k = 1}^\infty  B(\omega_k) = \bigcap_{k = 1}^\infty  B(\omega_k')}
lies in $S$, no matter how Bob plays. $S$ is called  
{\sl $\alpha$-winning \/} if it is   $(\alpha,\beta)$-winning for all $\beta > 0$,
and {\sl winning \/} if it is   $\alpha$-winning for some $\alpha > 0$.
We will denote balls chosen by Bob (resp., Alice) by 
$B_k \df  B(\omega_k) $ and $A_k \df  B(\omega_k')$. 
 \smallskip

The following three theorems are due to Schmidt \cite{Schmidt
games}. 

\begin{thm}
\name{thm: countable} Let $S_i\subset E$, $i\in\N$, be
a sequence of $\alpha$-winning sets for some $0 < \alpha < 1$; 
then $\cap_{i = 1}^\infty S_i$
is also  $\alpha$-winning.
\end{thm}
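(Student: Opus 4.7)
The plan is to construct, for any $\beta\in(0,1)$, an explicit winning strategy for Alice in the $(\alpha,\beta)$-game with target $\bigcap_{i\in\N}S_i$, by interleaving her individual winning strategies $\sigma_i$ for the sets $S_i$, dedicating a sparse subset of rounds to each $i$.

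Partition $\N$ into infinite subsets $N_i$, each an arithmetic progression with constant gap $g_i$; a convenient choice is $N_i=\{(2k-1)2^{i-1}:k\ge 1\}$, which gives gap $g_i=2^i$. For each $i$ set $\beta'_i\df\alpha^{g_i-1}\beta^{g_i}$. Since $S_i$ is $\alpha$-winning, Alice has a winning strategy $\sigma_i$ for the $(\alpha,\beta'_i)$-game targeting $S_i$. Her combined strategy in the real $(\alpha,\beta)$-game is as follows: at each round $j$, locate the unique $i$ with $j\in N_i$, and play the move prescribed by $\sigma_i$ in the ``virtual'' $(\alpha,\beta'_i)$-game whose sequence of Bob-moves (respectively Alice-moves) consists of the real $B_{j'}$ (respectively $A_{j'}$) for $j'\in N_i$.

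The verification has two parts. First, this virtual game is a legitimate $(\alpha,\beta'_i)$-game: if $j_k,j_{k+1}$ are consecutive elements of $N_i$, then Bob's real ball at round $j_{k+1}$ has radius $(\alpha\beta)^{j_{k+1}-j_k}=(\alpha\beta)^{g_i}$ times the radius of Bob's real ball at round $j_k$, hence $\alpha^{g_i-1}\beta^{g_i}=\beta'_i$ times the radius of Alice's real ball at round $j_k$; so the radii match the schedule of the $(\alpha,\beta'_i)$-game. The Schmidt containment $\le_s$ between consecutive virtual balls follows by transitivity of $\le_s$ applied to the nested sequence of real balls. Second, the virtual and real nested sequences of balls share the same unique limit point, since both have radii tending to zero. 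Consequently, the winning property of $\sigma_i$ places the limit in $S_i$ for every $i$, hence in $\bigcap_i S_i$.

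The one substantive choice is the partition of $\N$ into infinite subsequences of fixed gap; the dyadic partition works because every positive integer admits a unique representation $(2k-1)2^{i-1}$. The crucial (but elementary) point is that within this framework, $\alpha$-winning is exactly what is needed: $\sigma_i$ must work for an explicit $\beta'_i$ that depends on $g_i$, so one cannot merely assume each $S_i$ is $(\alpha,\beta)$-winning. The main obstacle one should check carefully is that at a round $j\in N_i$ Alice's prescribed $\sigma_i$-move is legal in the real game, but this is immediate since the virtual Alice radius equals the real Alice radius $\alpha r_j$, and the center is chosen by $\sigma_i$ inside Bob's virtual (=real) ball $B_j$.
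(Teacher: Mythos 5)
Your proof is correct and follows essentially the same route as the paper's proof of the analogous statement for modified Schmidt games (Theorem \ref{thm: countable general}): partition the rounds into the dyadic progressions $N_i=\{(2k-1)2^{i-1}\}$ and have Alice run, on the subsequence $N_i$, an $\alpha$-winning strategy for $S_i$ at the effective parameter $\beta'_i$ forced by the gap $2^i$, then observe that the shared limit point lies in every $S_i$. Your computation $\beta'_i=\alpha^{2^i-1}\beta^{2^i}$ (additively, $b'_i=(2^i-1)a+2^i b$) is the precise value; the paper's stated $a+(2^i-1)(a+b)$ appears to carry a small typographical slip, but the idea and all verification steps match.
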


\begin{thm}
\name{thm: full dim} Suppose the game is played on $E=\R^n$ with the
Euclidean metric; then any winning set is thick.
\ignore{
\eq{conclusion full dim}{
\dim(S\cap U) \ge \frac{\log c_n\beta^{-n}}{|\log \alpha\beta|}\,,
}
where $c_n$ is a constant depending only on $n$; in particular any $\alpha$-winning 
subset of $\R^n$  has \hd\ $n$.}
\end{thm}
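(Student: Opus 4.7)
The plan is to show that for any winning set $S \subset \R^n$ and any nonempty open set $U$, one has $\dim(S \cap U) = n$. Thickness then follows automatically: since Bob makes the first move, he can select his initial ball $B_1$ to lie inside $U$, and Alice's winning strategy guarantees that the resulting intersection point lies in $S \cap B_1 \subset S \cap U$. So we may reduce to the claim that any $\alpha$-winning set $S$ satisfies $\dim(S) \ge n$, and take this $\beta > 0$ to be chosen arbitrarily small at the end.

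The key idea is to construct a Cantor-like subset of $S$ by running many games in parallel against Alice's winning strategy. Fix Alice's $(\alpha,\beta)$-winning strategy. We build a tree of game plays of depth $\infty$ as follows. At the root, Bob plays an arbitrary ball $B_1$ of radius $r$, and Alice responds with $A_1 = B(x_1', \alpha r)$. Now, instead of Bob choosing a single next ball, we let Bob branch: we select $N$ candidate centers $x_2^{(1)}, \dots, x_2^{(N)}$ inside $B(x_1', \alpha r(1-\beta))$ such that the balls $B(x_2^{(j)}, \alpha\beta r)$ are pairwise disjoint (and legal Bob moves). A volume/packing argument in $\R^n$ shows we can arrange $N \ge c_n \beta^{-n}$ such centers, where $c_n > 0$ depends only on $n$. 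Each branch is then fed into Alice's strategy, producing disjoint Alice balls at the next level, and we iterate. At depth $k$ this yields $N^k$ disjoint balls of radius $r(\alpha\beta)^k$, each of which ultimately shrinks to a point of $S$.

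Applying the standard Hausdorff dimension estimate for Cantor-type constructions (via the mass distribution principle — put equal mass $N^{-k}$ on the $N^k$ balls at level $k$ and check the $s$-energy condition), we conclude
\[
\dim(S) \;\ge\; \frac{\log N}{\log \frac{1}{\alpha\beta}} \;\ge\; \frac{\log(c_n\beta^{-n})}{|\log \alpha| + |\log \beta|} \;=\; \frac{n|\log\beta| + \log c_n}{|\log\alpha|+|\log\beta|}.
\]
Since $S$ is $\alpha$-winning, this inequality holds for \emph{every} $\beta \in (0,1)$. Letting $\beta \to 0^+$ forces the right-hand side to $n$, yielding $\dim(S) \ge n$, and hence $\dim(S) = n$.

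The main technical obstacle is the packing step: we must verify that $\ge c_n \beta^{-n}$ disjoint balls of radius $\alpha\beta r$ can be placed inside the ball $B(x_1', \alpha r(1-\beta))$ of admissible Bob-centers. For small $\beta$ this is a routine Euclidean packing estimate (the radius of the admissible region is comparable to $\alpha r$ while the radius of the packed balls is $\alpha\beta r$, giving a volume ratio of order $\beta^{-n}$, and standard packing-vs-covering bounds convert this into an actual disjoint collection); the constant $c_n$ absorbs the efficiency of sphere packing in $\R^n$ and is independent of $\alpha$ and $\beta$. Once this combinatorial lemma is secured, the rest of the argument is the clean $\beta \to 0$ limit above.
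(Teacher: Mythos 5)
Your argument is correct and is essentially Schmidt's original proof: branch over many essentially disjoint moves for Bob at each stage, feed each branch into Alice's winning strategy to build a Cantor set inside $S$, and read off the dimension via the mass distribution principle, letting $\beta\to 0$ at the end. The one step where you are a little glib is ``check the $s$-energy condition'': since the branches of the tree are only disjoint, not geometrically separated, one has to use the ambient Lebesgue measure to bound how many of the disjoint level-$(k+1)$ balls of radius $\rho_{k+1}$ can meet a given ball $B(x,\rho)$ with $\rho_{k+1}\le\rho<\rho_k$; namely at most $C_n(\rho/\rho_{k+1})^n$, from which $\mu\bigl(B(x,\rho)\bigr)\le C_n(\rho/\rho_{k+1})^n N^{-(k+1)}\lesssim\rho^\gamma$ with $\gamma=\log N/\log\tfrac{1}{\alpha\beta}$ (using $\gamma\le n$). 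So the argument goes through, but it is the Lebesgue-volume bound, not separation, that is doing the work.

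The paper itself does not reprove this theorem directly. It is attributed to Schmidt, and then recovered as a special case of the general dimension bound, Theorem~\ref{thm: WD}, which is proved for modified Schmidt games with an ambient measure satisfying conditions ($\mu$1)--($\mu$2). In that framework the $\beta^{-n}$ packing count you compute is replaced by the ``density of children'' estimate in ($\mu$2): one verifies that the translated balls of Alice fill a definite proportion $c\sim\bar c\,\alpha^n$ of each parent (for Lebesgue measure this is exactly your packing argument, just stated multiplicatively), and the \hd\ lower bound is read off from Lemma~\ref{lem: Urbanski} (McMullen--Urbanski). Your route is more elementary and self-contained for $\R^n$ with Lebesgue measure; the paper's route is longer for this special case but buys generality: it applies verbatim to nilpotent Lie groups with Haar measure (Corollary~\ref{cor: msg3}), and, with the same scheme, to arbitrary Federer measures (Proposition~\ref{cor: Federer}), neither of which is reached by a bare Euclidean packing count.
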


\begin{thm}
\name{thm: ba} For any $n\in\N$, $\BA_n$ is $(\alpha,\beta)$-winning
whenever $2\alpha < 1 + \alpha\beta$; 
in particular, it is  $\alpha$-winning for any $0 < \alpha \le 1/2$.
\end{thm}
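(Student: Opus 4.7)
The plan is to choose a small $c = c(\alpha, \beta, n) > 0$ and exhibit a strategy for Alice guaranteeing that the limit point $\x$ satisfies \eqref{eq: defn ba}; equivalently, $\x$ must avoid every forbidden ball $F_{q,\p} := B(\p/q, c/q^{1+1/n})$ for $(q, \p) \in \N \times \Z^n$. Since the balls $A_k$ are nested, any $F_{q,\p}$ that is disjoint from some $A_k$ is automatically missed by the intersection point $\x$.

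The central observation is a \emph{positive displacement margin} arising from the hypothesis. At her $k$-th move, Alice may place the center of $A_k$ anywhere in $B(x_k, (1-\alpha)\rho_k)$, while Bob's subsequent center $x_{k+1}$ is confined to $B(x_k', \alpha\rho_k - \rho_{k+1}) = B(x_k', \alpha(1-\beta)\rho_k)$. In any direction of Alice's choosing the per-round net margin in her favor is $(1-\alpha)\rho_k - \alpha(1-\beta)\rho_k = (1+\alpha\beta - 2\alpha)\rho_k$, which is strictly positive precisely under $2\alpha < 1 + \alpha\beta$. Iterating over $m$ rounds and summing the geometric series in $\alpha\beta$, Alice can steer $x_{k+m-1}'$ a total distance up to $(1+\alpha\beta - 2\alpha)\rho_k/(1-\alpha\beta)$ from $x_k$ in her chosen direction, regardless of Bob's play. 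This capacity is what allows her to escape a forbidden ball whose center might sit near $x_k$, provided the ball's radius is smaller than this displacement.

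Alice's strategy organizes denominators by scale: assign each $q \in \N$ to a starting step $k(q)$ such that $\rho_{k(q)}$ slightly exceeds $s_q(1-\alpha\beta)/(1+\alpha\beta - 2\alpha)$, where $s_q = c/q^{1+1/n}$. For $c$ small enough, the $1/q$-spacing of denominator-$q$ rationals ensures at most one $\p_q$ satisfies $F_{q,\p_q} \cap B_{k(q)} \neq \emptyset$; at step $k(q)$ Alice initiates an $m(q) = O_{\alpha,\beta}(1)$-step displacement maneuver pushing $x_\cdot'$ away from $\p_q/q$, concluding with $A_{k(q)+m(q)-1} \cap F_{q,\p_q} = \emptyset$. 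The main obstacle is coordinating the several per-$q$ maneuvers that may be simultaneously active at any step. In the easiest regime $\alpha \leq 1/2$, where the hypothesis holds for every $\beta > 0$, a single-round avoidance already works (the forbidden-ball radius is below $(1-2\alpha)\rho_{k(q)}$, so Alice escapes in one move), and the countable intersection property (Theorem \ref{thm: countable}) applied to the $\alpha$-winning sets $S_q := \R^n \setminus \bigcup_{\p} F_{q,\p}$ yields $\BA_n = \bigcap_q S_q$ as $\alpha$-winning. For general $(\alpha,\beta)$ under the sharper hypothesis, one chooses $c$ small enough that only a bounded number of maneuvers are active at any given step, then proves a joint displacement lemma showing Alice can simultaneously steer away from the union of those $O(1)$ small forbidden balls using the same margin $(1+\alpha\beta - 2\alpha)\rho_k$.
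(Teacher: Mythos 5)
First, a point of orientation: the paper does not actually prove Theorem~\ref{thm: ba}. It is stated as one of three results ``due to Schmidt \cite{Schmidt games}'' and is cited, not re-derived. What the paper does prove is the weighted generalization (Theorem~\ref{thm: main}, via Theorem~\ref{thm: precise} and Lemma~\ref{lem: simplex}), whose specialization to $\vr = (1/n,\dots,1/n)$ gives $\alpha$-winning only for $\alpha<1/2$, not the finer $(\alpha,\beta)$ range $2\alpha < 1+\alpha\beta$.

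Your starting observation is correct and does capture the meaning of the hypothesis: the per-round net displacement Alice can secure in a chosen direction is $(1+\alpha\beta-2\alpha)\rho_k$, which is positive precisely when $2\alpha<1+\alpha\beta$, and summing the geometric series gives a total steering capacity of order $(1+\alpha\beta-2\alpha)\rho_k/(1-\alpha\beta)$. This is indeed the mechanism behind the constant.

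However, there is a genuine gap, and it is the central point of both Schmidt's proof and the paper's proof of Theorem~\ref{thm: precise}. You argue as if at each step Alice needs to avoid only $O(1)$ forbidden balls, justified by a spacing argument. The ``$1/q$-spacing'' holds only among rationals with the same denominator $q$; rationals $\p/q$ and $\p'/q'$ with $q\neq q'$ can be as close as $1/(qq')$ (for $n=1$) or closer (for $n>1$). The set of denominators $q$ with $k(q)=k$ is an interval whose length grows exponentially in $k$, so the number of forbidden balls $F_{q,\p_q}$ potentially meeting $B_k$ is unbounded as $k\to\infty$; the claim that ``only a bounded number of maneuvers are active at any given step'' is unjustified and, for $n>1$, false. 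Moreover even a bounded number of small forbidden balls scattered around $B_k$ could block every direction; a one-directional displacement lemma cannot handle that. The idea that rescues the argument --- and which is absent from your proposal --- is Davenport's observation (Lemma~\ref{lem: simplex} here, the ``simplex lemma''): for a box of volume $<J/(n!N^{n+1})$, all rationals with denominator $<N$ inside it lie on a single affine hyperplane $\mathcal L$. So Alice never needs to dodge a cloud of scattered balls; she steers away from one hyperplane, and the displacement margin is exactly what makes that possible. Without this lemma (or some equivalent control on the geometry of the rationals near $B_k$) the strategy cannot be made to close. As a smaller point, your ``single-round avoidance'' for $\alpha\le 1/2$ only works for $\alpha<1/2$: at $\alpha=1/2$ the margin $(1-2\alpha)\rho_k$ vanishes and the multi-round argument is required there too.
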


It can also be shown that for various classes of continuous maps of metric spaces,
\commargin{Removed details}
the images of winning sets are also winning for suitably modified values of constants.
See \cite[Theorem 1]{Schmidt games} and 
\cite[Proposition 5.3]{Dani survey} for details.



\subsection{A modification}\name{modified}
We now introduce a variant of this game, which is in fact a special case 
of the general framework of $(\frak F, \frak S)$-games described by
Schmidt in \cite{Schmidt games}.  As before, let $E$  be a complete metric space,
and let $\mathcal{C}(E)$ stand for the set of nonempty compact subsets of $E$.
Fix $t_*\in\R\cup\{-\infty\}$  and
define $\Omega = E \times (t_*,\infty)$\footnote{Note that everywhere one could replace $\R$ with
some fully ordered semigroup. This more general setup presents no
additional difficulties but we omit it to simplify notation.}. Suppose in addition that we are given
\begin{itemize}
\item[(a)] a partial ordering $\le$ on $\Omega$, and
\item[(b)] a monotonic function $\psi: (\Omega,\le)\to \big(\mathcal{C}(E),\subset\big)$.
\end{itemize}
Here monotonicity means that
$\omega' \le\omega$ implies 
$\psi(\omega') \subset \psi(\omega)$.
Now fix $a_* \geq 0$ and suppose that
the following property holds:
\begin{itemize}
\item[(MSG0)] 
For any $(x,t)\in\Omega$ and any $s >a_*$   there exists $x'
\in E$ such that $(x', t+s)\le (x,t)$.
\end{itemize} 

Pick two numbers $\A$ and $\B$, both bigger than $a_*$.  
Now Bob begins the $\psi$-$(a, b)$-game by choosing  $x_1\in E$ and $t_1 > t_*$, 
hence specifying a pair $\omega_1 \df (x_1,t_1)$. 
Alice  may now choose any point $x_1'\in E$ provided that 
$\omega_1'\df (x_1',t_1 + \A)\le\omega_1$. 
Next,  Bob chooses a point $x_2\in E$ such that
$\omega_2\df(x_2,t_1 + \A + \B)\le \omega_1'$, and so on. 
Continuing in the same manner, one obtains a nested sequence of compact subsets of $E$:
$$
B_1 =  \psi(\omega_1) \supset A_1 =  \psi(\omega_1') \supset 
\ldots\supset 
B_k = \psi(\omega_k) \supset A_k = \psi(\omega_k') \supset \ldots
$$
where $\omega_k =  (x_k,t_k)$ and  $\omega_k' =  (x_k',t_k')$ with \eq{si ti}{t_k =   t_1+(k-1)(\A+\B)\text{ and  }
t_k' =   t_1+(k-1)(\A+\B) + \A 
\,.}
Note that Bob
and Alice can always make their choices by virtue of (MSG0), and that the intersection
 \eq{intmodified}{
\bigcap_{k = 1}^\infty \psi(\omega_k) = \bigcap_{k = 1}^\infty \psi(\omega_k')}
 is  nonempty and compact. 
Let us say that
$S\subset E$ is {\sl $(a, b)$-winning\/}
for the {\sl modified Schmidt game corresponding to\/} $\psi$, to be
abbreviated as $\psi$-MSG, 
if 
Alice
can proceed in such a way that the set \equ{intmodified} is contained in $S$  no matter how 
Bob plays. 
Similarly, say
 that $S$ is an  {\sl $a$-winning\/} set of the game if 
 $S$ is $(a, b)$-winning for any choice of $b > a_*$, and that $S$ is
{\sl winning} if it is $a$-winning for some $a > a_*$. 
Note that we are
suppressing $a_*$ and $t_*$ from our notation, hopefully this will cause
no confusion. 


Clearly the game described above coincides with the original $(\alpha,
\beta)$-game if we let 
\eq{rn}{
\begin{aligned}
\psi(x,t) = B(x,e^{-t}), \quad (x',t')\le(x,t) \Leftrightarrow (x',e^{-t'}) \le_s (x,e^{-t}),\\
\A =- \log \alpha, \ \B =- \log \beta, \ a_*=0,\ t_* = - \infty\,.\qquad\quad
\end{aligned}}

Here is some more notation which will be convenient later. For $t > t_*$ we let
$$\Omega_t\df \{(x,t) : x\in E\}\,,$$ so that $\Omega$
is a disjoint union of the `slices' $\Omega_t$, $t > t_*$. Then for $s > 0$ 
and $\omega\in\Omega_t$ define
$$
I_s(\omega) \df \{\omega'\in \Omega_{t+s} : \omega' \le \omega\}\,.
$$
In other words, $
I_\A(\omega)$ and $
I_\B(\omega)$ are the sets of allowed moves of Alice and Bob
respectively starting from position $\omega$. Using this notation condition (MSG0) can be
reworded as 
\begin{itemize}
\item[(MSG0)] $I_s(\omega) \ne\varnothing$ for any   $\omega\in\Omega$, $s >a_*$.
\end{itemize}

\subsection{General properties}\name{general}
Remarkably, even in the quite general setup described in \S\ref{modified}, 
an analogue of Theorem \ref{thm: countable} holds and can be 
proved by a verbatim repetition of the argument 
from \cite{Schmidt games}:


\begin{thm}
\name{thm: countable general}  Let a metric space $E$, 
partially ordered $\Omega= X\times (t_*,\infty)$ and $\psi$ be as above, 
let $\A > a_*$,
and let $S_i\subset E$, $i\in\N$, be 
a sequence of $\A$-winning sets of the  $\psi$-MSG. Then $\cap_{i = 1}^\infty S_i$
is also  $\A$-winning.
\end{thm}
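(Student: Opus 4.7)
The plan is to mimic Schmidt's strategy-stealing proof of Theorem \ref{thm: countable}, translated to the additive (time) parameters of the MSG. Fix $\B > a_*$; it suffices to show $\bigcap_i S_i$ is $(\A,\B)$-winning. Since each $S_i$ is $\A$-winning, for every $\B_i > a_*$ Alice has a winning strategy $\sigma_i$ in the $(\A,\B_i)$-$\psi$-MSG for $S_i$, regardless of Bob's initial move. The idea is to have Alice play the main $(\A,\B)$-game while concurrently advancing infinitely many auxiliary $(\A,\B_i)$-games---one per $S_i$---by routing each round of the main game to a unique auxiliary game whose strategy $\sigma_i$ dictates her move.

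To implement this, I partition $\N$ into arithmetic progressions $R_i = \{r_{i,k}\}_{k\ge 1}$ with common difference $d_i$; a convenient concrete choice is $R_i = \{2^{i-1}(2k-1) : k \ge 1\}$, so $d_i = 2^i \ge 2$. Set $\B_i \df d_i(\A+\B) - \A = (d_i-1)\A + d_i\B$, which satisfies $\B_i > a_*$ because $\A,\B > a_* \ge 0$. Alice's main-game rule is then: on round $r = r_{i,k}$, output $\omega_r' \df \sigma_i(\omega_{r_{i,1}},\ldots,\omega_{r_{i,k}})$.

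What remains is bookkeeping. One verifies that for each $i$ the subsequence $\omega_{r_{i,1}},\omega_{r_{i,1}}',\omega_{r_{i,2}},\omega_{r_{i,2}}',\ldots$ is a legal play of the $(\A,\B_i)$-MSG starting at $\omega_{r_{i,1}}$: the time-parameter identity $t_{r_{i,k}}' = t_{r_{i,1}} + (k-1)(\A+\B_i) + \A$ is forced by the definition $\B_i = d_i(\A+\B)-\A$, and the partial-order condition $\omega_{r_{i,k+1}} \in I_{\B_i}(\omega_{r_{i,k}}')$ follows by transitivity of $\le$ along the main-game chain $\omega_{r_{i,k}}' \ge \omega_{r_{i,k}+1} \ge \omega_{r_{i,k}+1}' \ge \cdots \ge \omega_{r_{i,k+1}}$. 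Once these checks are in hand, $\sigma_i$ being winning yields $\bigcap_k \psi(\omega_{r_{i,k}}') \subset S_i$, and by monotonicity of $\psi$ together with cofinality of $R_i$ in $\N$ this intersection equals the main-game limit $\bigcap_k \psi(\omega_k')$. So that limit lies in every $S_i$, hence in $\bigcap_i S_i$. The only real obstacle is this time-parameter bookkeeping and the choice of $\B_i$; once the arithmetic progressions are fixed, the rest falls out automatically.
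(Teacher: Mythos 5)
Your proof is correct and follows essentially the same strategy-interleaving approach as the paper's: you partition $\N$ into the dyadic residue classes $R_i = \{k : k \equiv 2^{i-1} \pmod{2^i}\}$ with common difference $d_i = 2^i$, and route the moves in $R_i$ to an auxiliary $(\A,\B_i)$-strategy for $S_i$. Your computation $\B_i = d_i(\A+\B)-\A$ is in fact the correct one: the paper's stated auxiliary parameter $\A+(2^i-1)(\A+\B) = d_i(\A+\B)-\B$ inadvertently swaps the roles of $\A$ and $\B$, as one sees from $t_{r_{i,k+1}} - t_{r_{i,k}}' = d_i(\A+\B)-\A$ (try $\A=1$, $\B=2$, $d_1=2$: the subgame Bob gap is $5$, not $4$). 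Both quantities exceed $a_*$, so the needed strategies exist either way, but your bookkeeping is the version that actually makes the subgame a legal $(\A,\B_i)$-game.
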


\begin{proof} Take an arbitrary $\B > a_*$, and make Alice play according to the
 following
rule. At the first, third, fifth \dots\ move Alice will make a choice according
to an $(\A, 2\A + \B,S_1)$-strategy (that is, will act as if playing an 
$(\A, 2\A + \B)$-game trying to reach 
$S_1$). At the second, sixth, tenth \dots\ 
move she will use an $(\A, 4\A + 3\B,S_2)$-strategy. In general, at the $k$th move,
where $k \equiv 2^{i -1} (\text{mod}\, 2^i)$, she 
will play the
$\big(\A, \A + (2^i - 1)(\A + \B)\big)$-game trying to reach a point in $S_i$.
It is easy to see that, playing this way, Alice can enforce that the intersection
of the chosen sets belongs to $S_i$ for each $i$.\end{proof}

Here are two more general observations about MSGs
and their winning sets.

\begin{lem}
\name{lem: dummy}   Let $E$, $\Omega$ and $\psi$ be as above, and suppose
 that $S\subset E$,  $\A, \B
> a_*$ and ${t_0} > t_*$ are such that 
whenever Bob initially chooses $\omega_1 \in \Omega_{t}$ with $t \ge {t_0}$, 
Alice can win the game. 
Then $S$ is an $(a,b)$-winning set of the $\psi$-MSG.
\end{lem}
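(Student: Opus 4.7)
The plan is to show that Alice can always steer the game into a position where the hypothesis applies, by making a bounded number of arbitrary ``dummy'' moves that use up time until the time coordinate of the current state is at least $t_0$, and then to invoke the assumed winning strategy from that point onward.

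More concretely, suppose Bob opens with $\omega_1 \in \Omega_{t_1}$. If $t_1 \ge t_0$ there is nothing to prove, so assume $t_1 < t_0$. Choose $k \in \N$ to be the smallest positive integer with $t_k = t_1 + (k-1)(a+b) \ge t_0$. For the first $k-1$ rounds of the game Alice ignores $S$ entirely and simply selects, at each of her turns, \emph{any} element of $I_a(\omega_j)$; by condition (MSG0) applied with $s = a > a_*$, this set is nonempty, so such a choice is always available. Analogously, every Bob move is possible by (MSG0) applied with $s = b > a_*$. After these $k-1$ rounds the position is some $\omega_k = (x_k, t_k) \in \Omega_{t_k}$ chosen by Bob, with $t_k \ge t_0$.

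Now Alice regards $\omega_k$ as the initial move of a fresh $\psi$-MSG with the same parameters $a$ and $b$, starting at a time coordinate $\ge t_0$. By the hypothesis of the lemma, Alice has a strategy in this fresh game which guarantees that the resulting intersection lies in $S$. She follows this strategy for all subsequent moves of the original game. Since the sequence of balls produced in the original game from round $k$ onward coincides with the sequence produced by the fresh game, and since the full intersection
\[
\bigcap_{j=1}^\infty \psi(\omega_j)
\]
equals the tail intersection $\bigcap_{j \ge k} \psi(\omega_j)$, the winning point lies in $S$ regardless of Bob's play.

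This argument is essentially bookkeeping; the only substantive input is (MSG0), which guarantees that Alice's ``dummy'' moves (and Bob's unconstrained moves) can always be made, so that the game indeed reaches a slice $\Omega_{t_k}$ with $t_k \ge t_0$ in finitely many rounds. There is no real obstacle, but one should be careful to state that the choice of $k$ depends only on $t_1$, $t_0$, $a$, $b$, so the procedure terminates after a bounded number of dummy rounds before the hypothesized strategy takes over.
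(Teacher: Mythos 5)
Your proof is correct and is essentially the same as the paper's, which simply says Alice makes arbitrary "dummy" moves until the time parameter reaches $t_0$ and then invokes the assumed strategy. You have merely spelled out the bookkeeping (choice of $k$, appeal to (MSG0), equality of the full and tail intersections) that the paper leaves implicit.
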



\begin{proof} Regardless of the initial move of Bob, Alice can 
make arbitrary (dummy) moves waiting until $t_k$ becomes at least ${t_0}$,
and then apply the strategy  he/she is assumed to have. 
\end{proof}

This lemma shows that  the collection of  $(a,b)$-winning sets of a given $\psi$-MSG
depends only on 
the `tail' of the family $\{\Omega_t\}$ 
and not on the value of $t_*$. 
\commargin{removed  `ignore $t_*$', in fact we have $t_* = \infty$}

\begin{lem}
\name{lem: product}
Let $E_1,E_2$ be complete metric spaces, and
consider two games
corresponding to
%
$\psi_i: \Omega_i\to\mathcal{C}(E_i)$, where 
$\Omega_i = E_i \times (t_*,\infty)$.  Suppose that $S_i\subset E_i$ is an
$(\A, \B)$-winning set of the $\psi_i$-MSG, $i = 1,2$. Then
$S_1\times S_2$ is an $(\A, \B)$-winning set of the $\psi$-MSG played
on $E = E_1\times E_2$ with the product metric, where $\psi$
is defined by $$\psi(x_1,x_2,t) = \psi_1(x_1,t)\times \psi_2(x_2,t)\,.$$
\end{lem}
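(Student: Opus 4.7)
\medskip

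\noindent\textbf{Proof plan for Lemma \ref{lem: product}.}
The plan is to let Alice play the product game by secretly running two copies of her winning strategies in parallel, one in $E_1$ and one in $E_2$, coordinated by the common time parameter $t$. First I would pin down the data of the product game. The natural ordering to place on $\Omega = (E_1\times E_2)\times(t_*,\infty)$ is the componentwise one:
\[
(x_1',x_2',t') \le (x_1,x_2,t) \quad\Longleftrightarrow\quad (x_1',t')\le_1(x_1,t)\ \text{and}\ (x_2',t')\le_2(x_2,t).
\]
With this choice $\psi(x_1,x_2,t) = \psi_1(x_1,t)\times\psi_2(x_2,t)$ is immediately monotonic, and condition (MSG0) for $\psi$ follows coordinatewise from (MSG0) for $\psi_1$ and $\psi_2$, since the two $s$-successors can be produced independently at the same level $t+s$.

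Next I would describe Alice's strategy. Suppose Bob opens with $\omega_1 = (x_1^{(1)},x_2^{(1)},t_1)$; this amounts, via the componentwise ordering, to opening the $\psi_i$-MSG with $(x_i^{(1)},t_1)$ for $i=1,2$. Alice consults her winning $(a,b)$-strategies for the two component games; these produce $x_1^{(1)'}\in E_1$ and $x_2^{(1)'}\in E_2$ with $(x_i^{(1)'},t_1+a)\le_i(x_i^{(1)},t_1)$. She then plays $\omega_1' := (x_1^{(1)'},x_2^{(1)'},t_1+a)$, which is a legal move in the product game by the componentwise definition of $\le$. Inductively, whenever Bob plays $\omega_{k} = (x_1^{(k)},x_2^{(k)},t_k)$, Alice interprets it as a legal Bob-move in each coordinate, uses her assumed strategies to choose $x_i^{(k)'}$, and responds with $\omega_k' := (x_1^{(k)'},x_2^{(k)'},t_k+a)$.

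Finally I would identify the intersection. Because $\psi$ is a product and the nested decreasing sequences $\{\psi_1(x_1^{(k)},t_k)\}$ and $\{\psi_2(x_2^{(k)},t_k)\}$ consist of nonempty compact sets, one has
\[
\bigcap_{k=1}^\infty \psi(\omega_k) \;=\; \Bigl(\bigcap_{k=1}^\infty \psi_1(x_1^{(k)},t_k)\Bigr)\times\Bigl(\bigcap_{k=1}^\infty \psi_2(x_2^{(k)},t_k)\Bigr).
\]
By hypothesis the $i$-th factor is contained in $S_i$, so the product intersection lies in $S_1\times S_2$, establishing that $S_1\times S_2$ is $(a,b)$-winning for the $\psi$-MSG.

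There is no real obstacle here; the only point that requires any care is that every legal move of Bob in the product game must restrict to legal Bob-moves in each factor (and similarly for Alice), which is exactly what the componentwise definition of $\le$ guarantees. The set-theoretic identity of the intersection with the product of intersections is immediate from the rectangular shape of $\psi$ together with monotonicity, so the argument is essentially bookkeeping.
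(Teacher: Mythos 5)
Your proof is correct and is exactly the argument the paper has in mind; the paper's own proof is the one-liner ``Play a game in the product space by playing two separate games in each of the factors,'' which is precisely what you spell out. The only thing you supply beyond the paper is the explicit componentwise partial ordering on $\Omega$ (which the lemma statement leaves implicit), and that is the natural and intended choice.
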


\begin{proof} Play a game in the product space by playing two separate games in each 
of the factors.
\end{proof}

\commargin{Removed details here as well}
It is also possible to write down conditions on $f:E\to E$, quite restrictive in general,
 sending winning sets of
the $\psi$-MSG to winning sets. We will exploit this theme in
\S \ref{images contr}.

\subsection{Dimension estimates}\name{dimest}
Our next goal is to generalize Schmidt's lower estimate for the
  \hd\  of winning sets in $\R^n$. Note that in general it is not true,
 even for original  Schmidt's game  \equ{rn} played on an arbitrary
complete metric space, that winning sets have positive \hd: see
Proposition \ref{prop: lower dim example} for a counterexample. 
We are going to make some assumptions 
that will be sufficient to
ensure that a winning set for the $\psi$-MSG is big enough. Namely we will assume:

\begin{itemize}
\item[(MSG1)]  For any open $\varnothing\ne U \subset E$ there is 
$\omega \in
\Omega$ such that $\psi(\omega) \subset U$. 
\item[(MSG2)] There exist $C,\sigma > 0$ such that {\rm diam}$\big(\psi(\omega)\big) \le C e^{-\sigma t}$ 
for  all $t \ge t_*$, $\omega \in
\Omega_t$.
\end{itemize}

We remark that it follows from (MSG1) that any $(\A,\B)$-winning set of the game is dense,
and from (MSG2) that the intersection \equ{intmodified} consists of a single point.

To formulate two additional assumptions, we suppose that we are given 
a locally finite Borel measure $\mu$ on $E$ satisfying the following conditions:
\begin{itemize}
\item[($\mu$1)] 
 $\mu\big(\psi(\omega)\big) > 0$ for any $\omega \in
\Omega$.  
\item[($\mu$2)] 
\commargin{added $\rho$}
For any $\A > a_*$ there exist $c,\rho>0$ 
with the following property:
$\forall\,\omega \in \Omega$ with $\diam\big(\psi(\omega)\big) \le \rho$ 
and  $\forall\,\B > a_*$  $\exists\,\theta_1,\dots,\theta_N\in I_\B(\omega)$ 
such that $\psi(\theta_i), \ i = 1,\dots,N,$ are essentially disjoint, and that
for every  $\theta_i'\in I_\A(\theta_i)$,  $ i = 1,\dots,N$,
 one has
$$\mu\big(\bigcup_i \psi(\theta_i')\big) \geq c \mu\big(\psi(\omega)\big)\,.$$
\end{itemize}

The utility of the latter
admittedly cumbersome condition will become clear in the sequel, see
Proposition \ref{cor: Federer}. 
Here and hereafter we say that $A,B\subset E$ are {\sl essentially disjoint\/} if $\mu(A\cap B) = 0$. 
In particular, it follows from ($\mu$1) and (MSG1) that such a measure $\mu$ must have 
full support (this will be our standing assumption from now on). Also, note that (MSG0)
is a consequence of  
($\mu$2).


Now recall that the {\sl lower pointwise dimension\/} of $\mu$ 
at $x\in E$ is defined by\footnote{This and other properties,
such as the Federer property  introduced in \S\ref{sec: fulldim}, are
usually stated for open balls, but versions with closed balls
are clearly equivalent, modulo a slight change of constants if necessary.}
$$
\underline{d}_\mu(x) \df \liminf_{r\to 0}\frac{\log \mu\big(B(x,r)\big)}{\log r}\,,
$$
and for $U\subset E$ let us put $$
\underline{d}_\mu(U) \df \inf_{x\in U}\underline{d}_\mu(x) \,.
$$
It is known, see e.g.\ \cite[Proposition 4.9(a)]{Falconer} or \cite[Theorem 7.1(a)]{Pesin}, that $
\underline{d}_\mu(U)$ is a lower bound for the \hd\ of  $U$ for any
nonempty open $U\subset E$, and very often it is possible to 
choose $\mu$ such that $
\underline{d}_\mu(x)$ is equal to $\dim(E)$ for every $x$.
For instance this is the case when $\mu$ {\sl satisfies a power law\/}, 
that is, if there exists $\gamma, c_1, c_2, r_0 > 0$ such that
\eq{pl}{c_1r^\gamma \le \mu\big(B(x,r)\big) \le c_2 r^\gamma\text{  whenever }r \le r_0\text{ and }
x\in E
}
(then necessarily $\dim(U) = \gamma$ for any nonempty open $U\subset E$).

\ignore{
Now for any $s > 0$  define $N(s)$ to be the maximal $N$
such that for any $t \ge t_*$ and any $x\in E$ there exist $N$ essentially disjoint 
sets $D_{ x_1,t+s}  ,\dots D_{x_N,t+s}  $ of $D_{t+,s}$ 
which are contained in
$D_{x,t}$. Here `essentially disjoint' means zero measure of the intersection.
Note that in order for $(a,b)$-winning sets to exist we must at least demand that $N(a) > 0$,
otherwise  Alice might be left with no  moves to make. 
}

\begin{thm}
\name{thm: WD} 
Suppose that   $E$, $\Omega$, $\psi$ 
and a  measure $\mu$ on $E$
are such that {\rm (MSG0--2)} and  {\rm ($\mu$1--2)}  hold.
Take $\A,\B > a_*$ and let \commargin{added $(\A,\B)$-winning}
$S$ be an $(\A,\B)$-winning set  of the $\psi$-MSG. Then for any 
open $\varnothing \ne U\subset E$, one has \eq{wd}{\dim (S \cap
U) \geq \underline{d}_{\mu}(U)  +
\frac1\sigma\left({ \frac{\log c}{\A+\B}  }\right) \,,} where $\sigma$ is as in {\rm (MSG2)}
and $c$ as in  {\rm ($\mu$2)}. In particular, $\dim (S \cap
U) $ is not less than $ \underline{d}_{\mu}(U) $ whenever $S$ is winning.
\end{thm}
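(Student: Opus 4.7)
The plan is to construct a Cantor-like set $K\subset S\cap U$ by combining Alice's winning strategy with Bob's branching via $(\mu 2)$, place a measure $\nu$ on $K$, and apply the mass distribution principle.

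First, using (MSG1) I arrange that Bob's opening move $\omega_1$ satisfies $\psi(\omega_1)\subset U$, so the entire play stays in $U$. After some initial rounds of play, (MSG2) guarantees that the current Alice-ball has diameter at most the $\rho$ of $(\mu 2)$, so that $(\mu 2)$ becomes applicable. To turn a single play into a tree, at every Alice-state $\omega$ of the tree I apply $(\mu 2)$ to produce $N=N(\omega)$ essentially disjoint Bob-responses $\theta_1,\dots,\theta_N\in I_b(\omega)$; in each branch I have Alice answer by her $(a,b)$-winning strategy to obtain $\theta_i'\in I_a(\theta_i)$. Monotonicity of $\psi$ keeps the $\psi(\theta_i')\subset\psi(\theta_i)$ essentially disjoint, and $(\mu 2)$ gives $\sum_i\mu(\psi(\theta_i'))\ge c\,\mu(\psi(\omega))$. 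Let $L_k$ be the union of the depth-$k$ Alice-balls produced in this way and set $K=\bigcap_k L_k$. Each infinite descending branch is a legal play in which Alice wins, so its intersection \equ{intmodified} lies in $S$; taking the union over branches yields $K\subset S\cap U$.

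Next I build a probability measure $\nu$ on $K$ by recursively distributing mass: each depth-$k$ Alice-ball $A$ splits its $\nu$-mass among its children $A'$ in proportion to $\mu(A')$. Since $\sum_{A'}\mu(A')\ge c\mu(A)$, the recursion telescopes to $\nu(A)\le \mu(A)/\bigl(c^k\mu(\psi(\omega_1))\bigr)$ for every depth-$k$ Alice-ball $A$. By (MSG2), such an $A$ has diameter at most $Ce^{-\sigma t_k'}$ with $t_k'$ linear in $k$ with slope $a+b$. Given a small ball $B(x,r)$, I choose $k$ so that depth-$k$ Alice-balls have diameter comparable to $r$, i.e.\ $k\sim(\sigma(a+b))^{-1}\log(1/r)$. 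The depth-$k$ Alice-balls meeting $B(x,r)$ are essentially disjoint and contained in $B(x,O(r))$, so summing the previous estimate gives
\[\nu(B(x,r))\le \frac{\mu(B(x,O(r)))}{c^k\,\mu(\psi(\omega_1))}.\]
For any $\eta>0$ and for $r$ sufficiently small (depending on $x$), the definition of $\underline{d}_\mu$ yields $\mu(B(x,O(r)))\le r^{\underline{d}_\mu(U)-\eta}$, while $c^{-k}\asymp r^{(\log c)/(\sigma(a+b))}$, giving
\[\nu(B(x,r))\lesssim r^{\underline{d}_\mu(U)-\eta+\frac{\log c}{\sigma(a+b)}}.\]
The mass distribution principle then lower-bounds $\dim K$, and sending $\eta\to 0$ yields \equ{wd}.

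The main technical obstacle is the $x$-dependence of the threshold $r_0(x)$ below which the lower pointwise dimension estimate applies: the bound $\mu(B(x,O(r)))\le r^{\underline{d}_\mu(U)-\eta}$ only holds for $r<r_0(x)$. This is resolved in the standard way by writing $K=\bigcup_n K_n$ where on $K_n$ the bound holds uniformly for $r\le 1/n$, applying the mass distribution principle on each $K_n$, and invoking countable stability of Hausdorff dimension to conclude $\dim K=\sup_n\dim K_n$. A secondary bookkeeping point is that Alice's strategy, applied independently in each branch of the tree, must yield a well-defined descent; this is automatic since her strategy only consults the history within its own branch.
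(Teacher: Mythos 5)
Your proposal is correct and follows the same route as the paper: build a tree by interleaving Alice's winning moves with the essentially disjoint Bob-branches supplied by $(\mu 2)$, and bound the Hausdorff dimension of the limit set. The only difference is presentational: the paper invokes Lemma~\ref{lem: Urbanski} (the McMullen--Urbanski estimate for strongly tree-like families, from \cite{bad}) as a black box, whereas you unfold that lemma's proof inline by constructing the measure $\nu$ and applying the mass distribution principle; incidentally, the detour through the sets $K_n$ and countable stability is not needed, since the pointwise version of the mass distribution principle that the paper cites, \cite[Proposition~4.9(a)]{Falconer}, already tolerates an $x$-dependent threshold $r_0(x)$.
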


Before proving this theorem let us observe that it generalizes Theorem \ref{thm: full dim}, 
with Lebesgue measure playing the role of $\mu$. Indeed, conditions (MSG0--2) are trivially satisfied in the case \equ{rn}. 
It is also clear that  ($\mu$1) holds and that  $ \underline{d}_{\mu}(x) = n$ for all $x\in\R^n$. 
As for ($\mu$2), note that there exists a constant $\bar c$,
depending only on $n$, such that for any  $0<\beta < 1$, the unit ball in $\R^n$ contains
a disjoint collection of closed balls $D_i'$ of radius $\beta$ of relative measure at least $\bar c$; and no matter how 
balls $D_i\subset D_i'$ of radius $\alpha\beta$ are chosen, their total relative measure will
not be less than $\bar c \alpha^n$. 
Rescaling, one obtains ($\mu$2). 
See  Lemma \ref{prop: msg3} and Proposition \ref{cor: Federer} for  further generalizations.

\smallskip

For the proof of Theorem \ref{thm: WD} 
we will use a construction suggested in \cite{McMullen,
Urbanski} and formalized in  \cite{KM}. 
Let $E$ be a  complete  metric space  equipped with a locally finite Borel measure $\mu$.  
Say that a
countable family $\ca$ of compact subsets of $E$ of positive measure
        is {\sl
tree-like\/} (or {\sl tree-like with respect to $\mu$}) if $\ca$ is
the 
union of finite subcollections 
$\ca_k$, $k\in\Z_+$, such that $\ca_0 = \{A_0\}$ and the following four
conditions are satisfied:
\begin{itemize}
\item[(TL0)]
$\mu(A)  > 0$ for any $A\in \ca\,;$
\smallskip
\item[(TL1)]
$\forall\,k\in\N\quad \forall\, A, B \in \ca_k\quad\text{either
}A=B\quad\text{or}\quad {\mu}(A\cap B) = 0\,;$
\smallskip
\item[(TL2)]
$\forall\,k\in\N\quad \forall\,B \in \ca_k\quad\,\exists\,A\in
\ca_{k-1} \quad\text{such that}\quad B\subset A\,;$
\smallskip
\item[(TL3)]
$\forall\,k\in\N\quad \forall\,A \in \ca_{k-1} \quad\, 
\exists\,B\in
\ca_{k} \quad\text{such that}\quad B\subset A
$.

\end{itemize}

Then one has $A_0\supset \cup \ca_1\supset \cup \ca_2\dots$,
a decreasing intersection of nonempty compact sets
(here and \commarginnew{The referee complained about this notation, but now it is explained}
elsewhere we denote $\cup \mathcal{A}_k = \bigcup_{A\in \mathcal{A}_k}A$), which defines
the (nonempty) {\sl limit set\/} of $\ca$, 
$$
\ay = \bigcap_{k\in
\N}\cup \ca_k\,.
$$
Let us also define the {\sl $k$th
stage diameter\/} $d_k(\ca)$ of $\ca$:
$$
d_k(\ca)\df\max_{A\in\ca_k}\text{diam}(A)\,,
$$
and say that $\ca$ is {\sl strongly tree-like\/} if it is
tree-like and in addition
\begin{itemize}
\item[(STL)] \qquad
$\lim_{k\to\infty}d_k(\ca) = 0\,.$
\end{itemize}

Finally, for $k\in \Z_+$ 
let us define the $k$th stage `density of children' of  $\ca$
by
$$
\Delta_k(\ca) \df \min_{B\in\ca_k}\frac{{\mu}\big(\cup  \ca_{k+1}\cap B\big)}{{\mu}(B)}\,,
$$
the latter being always positive due to (TL3).
The following lemma, proved in \cite{bad} and  generalizing results of C.~McMullen 
\cite[Proposition 2.2]{McMullen}
and  M.~Urbanski \cite[Lemma 2.1]{Urbanski},  provides a needed lower estimate
for the \hd\ of $\ay$:
\begin{lem}
\name{lem: Urbanski}
Let $\ca$ be a strongly
tree-like (relative to
$\mu$) collection  of subsets of $A_0$.
Then
for any open $U$ intersecting $\ay$ one has
$$
\dim (\ay \cap U) \geq \underline{d}_\mu(U)
 -
\limsup_{k\to\infty}\frac{ \sum_{i=0}^{k}\log\Delta_i(\ca)}{\log\, d_{k}(\ca)}\,.
$$
\end{lem}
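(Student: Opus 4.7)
The plan is to construct a natural probability measure $\nu$ supported on $\ay$ using the tree structure, derive an upper bound for $\nu$ on the tree pieces in terms of $\mu$ and the densities $\Delta_i(\ca)$, and then use a Billingsley-type covering argument to convert this into a Hausdorff dimension lower bound for $\ay \cap U$.

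First I would define $\nu$ by Carath\'eodory extension from the rule $\nu(A_0) = 1$ and, for each $B \in \ca_k$ with parent $A \in \ca_{k-1}$ (which exists by (TL2)),
\[
\nu(B) \df \nu(A) \cdot \frac{\mu(B)}{\mu\bigl(\cup \ca_k \cap A\bigr)}.
\]
The essential disjointness (TL1) makes the children of $A$ partition $\nu(A)$ consistently, while (TL0) and (TL3) ensure positivity of all denominators; the resulting $\nu$ is a Borel probability measure supported on $\ay = \bigcap_k \cup \ca_k$. Iterating the recursion along a chain $A_0 \supset B^{(1)} \supset \cdots \supset B^{(k)}$ with $B^{(i)} \in \ca_i$, and using $\mu(\cup\ca_i \cap B^{(i-1)}) \ge \Delta_{i-1}(\ca)\mu(B^{(i-1)})$ in each factor, telescopes to
\[
\nu(B^{(k)}) \le \frac{\mu(B^{(k)})}{\mu(A_0)\prod_{i=0}^{k-1}\Delta_i(\ca)}.
\]

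For $x \in \ay$ let $B^{(k)}(x) \in \ca_k$ denote the (essentially unique) tree element through $x$. By (STL) one has $\diam B^{(k)}(x) \le d_k(\ca) \to 0$, so $B^{(k)}(x) \subset B(x, d_k(\ca))$, and the estimate above becomes $\nu(B^{(k)}(x)) \le \mu(B(x,d_k(\ca)))/\bigl(\mu(A_0)\prod_{i=0}^{k-1}\Delta_i(\ca)\bigr)$. Taking logarithms, dividing by the negative number $\log d_k(\ca)$, and passing to $\liminf$ as $k \to \infty$, one obtains for $x \in \ay \cap U$
\[
\liminf_{k \to \infty} \frac{\log \nu\bigl(B^{(k)}(x)\bigr)}{\log d_k(\ca)} \ge \underline{d}_\mu(U) - \limsup_{k \to \infty} \frac{\sum_{i=0}^{k} \log \Delta_i(\ca)}{\log d_k(\ca)},
\]
where the stated upper index $k$ in the sum is harmless because $\log \Delta_k(\ca) \le 0$.

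The main obstacle is to upgrade this tree-scale pointwise control to a genuine Hausdorff dimension bound, since elements of $\ca_k$ are not metrically separated and a single ball $B(x,r)$ may meet many of them, so the standard mass distribution principle applied to metric balls does not apply directly. I would proceed instead by a direct covering argument: given any cover $\{B(x_\alpha, r_\alpha)\}$ of $\ay \cap U$ with small radii, choose $k_\alpha$ so that $d_{k_\alpha}(\ca) \asymp r_\alpha$, observe that each $\ca_{k_\alpha}$-element meeting $B(x_\alpha, r_\alpha)$ is contained in $B(x_\alpha, 2r_\alpha)$, and combine essential disjointness (TL1) with the bound of the previous paragraph to estimate
\[
\nu\bigl(B(x_\alpha, r_\alpha)\bigr) \le \frac{\mu\bigl(B(x_\alpha, 2r_\alpha)\bigr)}{\mu(A_0)\prod_{i=0}^{k_\alpha - 1}\Delta_i(\ca)}.
\]
Applying the definition of $\underline{d}_\mu$ to the numerator and the limsup hypothesis of the theorem to the denominator gives, for any $s$ strictly below the right-hand side of the claimed inequality, a uniform bound $\sum_\alpha r_\alpha^s \gtrsim \nu(\ay \cap U) > 0$, forcing the $s$-dimensional Hausdorff measure of $\ay \cap U$ to be positive and hence $\dim(\ay \cap U) \ge s$. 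Letting $s$ tend to the right-hand side concludes the proof. The delicate bookkeeping --- matching the scales $d_k(\ca)$ to the cover radii $r_\alpha$ and controlling the error terms in the limsup --- is where the real work lies, but is carried out in \cite{McMullen} and \cite{Urbanski} in comparable generality.
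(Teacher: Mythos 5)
Your proposal is essentially correct, and it takes the same route as the proof the paper relies on. In fact, the paper does not supply its own proof of this lemma: it cites \cite{bad} (and, through it, McMullen and Urbanski), and then adds the remark that the argument of \cite[Lemma 2.5]{bad} is ``based on the Mass Distribution Principle'' and works verbatim in any complete metric space. What you have written out is precisely an unfolding of that argument: define a probability measure $\nu$ on the limit set by distributing mass along the tree proportionally to $\mu$, telescope the recursion using $\mu(\cup\ca_k\cap A)\ge \Delta_{k-1}(\ca)\,\mu(A)$ to obtain $\nu(B^{(k)})\le \mu(B^{(k)})/\bigl(\mu(A_0)\prod_{i<k}\Delta_i\bigr)$, use (STL) to pass from a tree piece to a metric ball, and then apply a covering/mass-distribution argument at matched scales to extract the Hausdorff dimension bound on $\ay\cap U$. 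Your discussion of the two minor subtleties is also accurate: the parent in (TL2) is unique because a child has positive $\mu$-measure while distinct siblings intersect in a $\mu$-null set, and the discrepancy between the index $k-1$ you obtain naturally and the index $k$ in the stated formula only makes the claimed bound weaker, since each $\log\Delta_i(\ca)\le 0$ and $\log d_k(\ca)<0$.
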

Note that even though \cite[Lemma 2.5]{bad} is stated for $E = \R^n$, its proof,
including  the Mass Distribution Principle on which the lower estimate for the \hd\ is based,
is valid in the generality of an arbitrary complete metric space. 

\begin{proof}[Proof of Theorem \ref{thm: WD}] Our goal is to find a 
\stl\ collection $\ca$ of sets whose limit set is a subset of $S \cap
U$. It will be constructed by considering 
possible moves for  Bob at each stage of the game, and the
corresponding counter-moves specified by Alice's winning
strategy. 
Fix $\A,\B > a_*$ for which $S$ is $(\A,\B)$-winning. By assumption (MSG1), 
Bob may begin the game by choosing $t_1
> t_*$ and $\omega_1 \in \Omega_{t_1}$ such that $\psi(\omega_1)\subset U$
and $\diam\big(\psi(\omega_1)\big) < \rho$, where $\rho$ is as in ($\mu$2). 
Since $S$ is winning, Alice can choose $\omega_1' \in I_\A(\omega)$ 
such that  $A_0\df \psi(\omega_1')$  has nonempty intersection with
$S$;  it will be the ground set of our tree-like 
family. 

Now let  $\theta_1, \ldots, \theta_N\in  I_\B(\omega_1')$ be as in ($\mu$2) for
$\omega = \omega_1'$. Each of
\commarginnew{added some more explanations, is this enough? feel free to edit...}
these could be chosen by Bob at the next step of the game. Since $S$ is $(\A,\B)$-winning,
for each  of the above choices $\theta_i$  Alice can pick  $\theta_i' \in  I_\A(\theta_i)$ 
such that every  
sequence of possible further moves of  Bob can be counter-acted by Alice resulting
in her victory in the game. 
The collection of images $ \psi(\theta_i')$ of these choices of Alice, essentially disjoint 
in view of ($\mu$2),  will comprise the first
level $\ca_1$ 
of the tree. Repeating the same for each of the choices we obtain $\ca_2$, $\ca_3$
etc. Property (TL0) follows from ($\mu$1), and   (TL1--3) are immediate from the construction.
Also, in view of (MSG2) and  \equ{si ti},
 the $k$th
stage diameter $d_k$ is not bigger than 
$C e^{-\sigma 
(t_1 + k(\A + \B) + \A)}$,
hence (STL). Since Alice makes choices using her winning strategy, the limit
set $\ay$ of the collection must lie in $S$. Assumption ($\mu$2)
implies that $\Delta_k(\ca)$ is bounded below by a positive constant
$c$ independent of $k$ and $\B$.
\ignore{ Let $D \in \ca_k \cap
\cd_{s_k}$, let $D'_1, \ldots, 
D'_N \in \cd_{t_k}$ be 
Bob's essentially disjoint possible choices contained in $D$, as in (MSG1). Let
$A_1, \ldots, A_N$ 
be Alice's corresponding choices, so that $A_i \subset
D'_i,\, A_i \in \cd_{s_{k+1}}$ for $i=1, 
\ldots, N$. Let $p$ and $D_1, \ldots, D_N \in
\cd_{t_{k+1}-p}$ be as in assumption (MSG1), so that 
$D \subset \bigcup D_i.$
By the strong Federer assumption, there are positive $c_1, c_2,$
depending only on $\A$ and $p$ respectively, such that 
$$\mu(A_i) \geq c_1\mu(D'_i) \ \ \mathrm{and} \ \mu(D'_i) \geq c_2
\mu(D_i).$$
Therefore 
\[
\begin{split}
\mu \left(\bigcup_i A_i\right ) & \geq c_1 
\mu \left(\bigcup_i D'_i\right ) 
 = c_1 \sum_i
\mu \left(D'_i \right ) \\ & \geq c_1c_2 \sum_i
\mu \left(D_i \right ) \geq c_1c_2 \mu(D).
\end{split}
\]
Since $\ca_{k+1}(B) = \left\{A_1, \ldots, A_N \right\}$ we obtain
$\Delta_k(\ca) \geq c_1c_2.$

Finally,
 every parent contains at least $N(\B)$ children, and, by assumption (MSG1), each 
has measure $e^{-\delta (\A+\B)}$ times the measure of the parent. Hence
$\Delta_i(\ca) \ge N(\B)e^{-\delta (\A+\B)}$ for each $i$, and, by
}
Applying  Lemma \ref{lem: Urbanski} 
we find
\begin{equation*}
\begin{aligned}
\dim (\ay \cap U)& \geq  \underline{d}_\mu(U) -\limsup_{k\to\infty}\frac{ (k+1)\big(\log c\big)}
{\log\, C - \sigma (t_1 + k(\A + \B) + \A)
}\\
& 
=   \underline{d}_\mu(U)
 +
\frac1\sigma\left({ \frac{\log c}{\A+\B}  }\right) \to_{\B \to \infty}
\underline{d}_\mu(U)\,.
\end{aligned}
\end{equation*}
\end{proof}
\ignore{
We remark that it follows from (MSG1) that $N(s)$ is always not greater than $e^{\delta s}$,
which implies that the quantity subtracted from $  \underline{d}_\mu(U)$ in  \equ{wd}
is always nonnegative. On the other hand, if we assume that 
\begin{itemize}
\item[(MSG3)] $$\limsup_{s\to\infty}\frac {\log N(s)} s = \delta\,,$$
\end{itemize}
where $\delta$ is the same as in (MSG1) (roughly speaking, the above condition means that the union of 
disjoint 
sets $D_{x_i,t+s}$ inside $D_{x,t}$ fills up a positive 
proportion of the measure of $D_{x,t}$ if  $s$ is large, uniformly in $t$ and $x$), then the
subtrahend in  \equ{wd} will be very small when $b$ is large.  This proves

\begin{cor}
\name{cor: full dim}
In addition to the assumptions of Theorem \ref{thm: WD}, 
suppose that {\rm (MSG3)} also holds, and that $\underline{d}_\mu(x)$
is equal to $\dim(E)$ for a dense set of $x\in E$.
Then  the intersection of any winning set of 
the $\cd$-MSG with any open $U\subset E$  has full \hd.
\end{cor}

\begin{proof} It follows from (MSG3) that he right hand side of \equ{wd} tends to $ \underline{d}_\mu(U) = \dim(E)$
as $b\to\infty$. \end{proof}
Clearly both assumptions of the corollary hold in the set-up of Schmidt's original game.
In the next section 
we will describe a more general situation when conditions (MSG1--3) can be verified. 
}

\section{Games induced by contracting automorphisms}\name{sec: contr}
\subsection{Definitions}\name{def contr}
In this section we take $E = H$ to be a connected 
Lie group with a right-invariant Riemannian metric $d$, and assume that it admits 
a one-parameter group of 
automorphisms $
\{{\Phi}_t: t \in\R\}$ such that $\Phi_t$ is contracting for $t > 0$ 
(recall that ${\Phi}:H\to H$ is {\sl contracting\/} if 
for every $g\in H$, ${\Phi}^k(g)\to e$ as $k \to \infty$). It is not hard to see that 
$H$
must be simply connected and nilpotent, 
and the differential of each ${\Phi}_t$, $t > 0$, 
must be a linear isomorphism of
the Lie algebra $\goth h$ of $H$ with the modulus of all  eigenvalues strictly less than $1$. 
%
In other words, ${\Phi}_t = \exp(tX)$ where $X \in \End ( \goth h)$ and the real parts of all eigenvalues of $X$ are negative. Note that
$X$ is not assumed to be diagonalizable,  although this will be the case in 
our main example.
\commargin{removed `all the 
applications of the present paper', kind of a strong prediction}  

Say that a subset $D_0$ of $H$  is {\sl admissible\/} if it
is  compact and has non-empty interior.
For such $D_0$ and any $t \in\R$ and $x\in H$, 
define 
\eq{def psi}{\psi(x,t) =  {\Phi}_{t}(D_0)x\,,
}
and then introduce a partial ordering on $\Omega \df H \times \R$ by
\eq{def order}{
(x',t')\le (x,t)\quad\iff\quad\psi(x',t')\subset\psi(x,t)\,.
}
Monotonicity of $\psi$ is immediate from the definition, and we claim that, 
with $t_* = -\infty$ and some $a_*$, it satisfies conditions (MSG0--2).
Indeed, let $\sigma>0$ be any number such that  the real
parts of all the eigenvalues of $X$ are  smaller than $-\sigma$. 
Then, since $D_0$ is bounded,  it follows that for some $c_0 > 0$ 
one has 
\eq{dist squeeze}{d\big({\Phi}_t(g),{\Phi}_t(h)\big) \le c_0 e^{-\sigma t}
}
for all $g,h\in D_0$, thus (MSG2) is satisfied
(recall that the metric is chosen to be right-invariant, so all the elements of $\psi(\Omega_t)$ are isometric to
${\Phi}_{t}(D_0)$).
For the same reasons, for any open  $U\subset H$ there exists $s = s(U) > 0$ such that $U$ contains
a translate of  ${\Phi}_{t}(D_0)$ for any $t \ge s$, which implies (MSG1). Since $D_0$ is assumed to have
nonempty interior, (MSG0) follows as well, with $a_* = s(\Int D_0)$. 

 We denote $\mathcal{F} \df \{{\Phi}_t : t > 0\}$ and refer to the game determined by 
\equ{def psi} and \equ{def order} 
as 
 the  modified Schmidt game {\sl induced\/} by $\mathcal{F}$. Note that in this situation
the map $\psi$ is injective, i.e.\ the pair $(x,t)$ is uniquely determined by $D_0$ and the translate ${\Phi}_t(D_0)x$.
Consequently, without loss of generality we can describe the game in the language of choosing translates of
${\Phi}_\A(D)$ or ${\Phi}_\B(D)$ inside $D$, where $D$ is a domain chosen at some stage of the game. Clearly
when $H = \R^n$, $D_0$ is a closed unit ball and ${\Phi}_t = e^{-t}\text{Id}$, we recover Schmidt's original game.


  Note also that we have
suppressed $D_0$ from the notation. This is justified in
light of the following proposition: 

\begin{prop}
\name{prop: initial domain} Let $D_0, D_0'$ be admissible, and  define 
 $\psi$ and $\psi'$ as in \equ{def psi} using $D_0$ and $D_0'$
respectively. Let
$s>0$ be such that 
for 
some $x,x'\in H$,
\eq{squeeze}{
 {\Phi}_{s}(D_0)x\subset D_0' 
\text{ and }  {\Phi}_{s}(D_0')x'\subset D_0
}
(such an $s$ exists in light of \equ{dist squeeze} and the admissibility of $D_0, D_0'$). 
Suppose that  $\B > 2s$ and $S\subset H$ is $(\A, \B)$-winning for the $\psi$-MSG;
then it is $(\A + 2s,\B - 2s)$-winning for the $\psi'$-MSG. In particular, if 
 $S$ is $a$-winning for the $\psi$-MSG, then it is $(\A+
2s)$-winning for the $\psi'$-MSG. 
\end{prop}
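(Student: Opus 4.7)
The plan is to have Alice play the $\psi'$-MSG by secretly running a "virtual" $\psi$-MSG with time shifted by $s$, using the winning strategy she has for the $\psi$-game. The key observation is that the containment \equ{squeeze} transfers across levels in a very clean way: since $\Phi_t$ is a Lie group automorphism, applying $\Phi_t$ to $\Phi_s(D_0)x\subset D_0'$ gives $\Phi_{t+s}(D_0)\Phi_t(x)\subset \Phi_t(D_0')$, and right-multiplying by any $y\in H$ yields
\[
\psi(\Phi_t(x)\,y,\,t+s)\ \subset\ \psi'(y,t)\,.
\]
Symmetrically, $\psi'(\Phi_t(x')\,y,\,t+s)\subset \psi(y,t)$. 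So there are explicit conversion maps $y\mapsto\Phi_t(x)y$ and $y\mapsto\Phi_t(x')y$ translating $\psi'$-positions into $\psi$-positions (and vice versa) at time $t+s$, each of which preserves the ordering from \equ{def order}.

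Alice's strategy: when real Bob opens the $\psi'$-MSG with $(y_1,t_1)$, she creates a virtual Bob move $(\tilde y_1,t_1+s)$ in the $\psi$-MSG, where $\tilde y_1=\Phi_{t_1}(x)y_1$, so that $\psi(\tilde y_1,t_1+s)\subset\psi'(y_1,t_1)$. She uses her $(\A,\B)$-winning strategy for the $\psi$-game to produce a virtual Alice move $(\tilde y_1',t_1+s+\A)$, and then converts back via the second inclusion of \equ{squeeze} to a real move $(y_1',t_1+\A+2s)$ with $y_1'=\Phi_{t_1+s+\A}(x')\tilde y_1'$, ensuring $\psi'(y_1',t_1+\A+2s)\subset\psi(\tilde y_1',t_1+s+\A)$. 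Iterating: given real Bob's $k$-th move $(y_k,t_k)$ with $t_k=t_1+(k-1)(\A+\B)$, the virtual Bob move $(\tilde y_k,t_k+s)$ is scheduled exactly $\B$ after virtual Alice's previous move (since $t_k+s=(t_{k-1}+s+\A)+\B$), so this is a legal move in the $\psi$-game with parameters $(\A,\B)$. Similarly, real Alice's move at time $t_k+\A+2s$ sits exactly $\A+2s$ after real Bob's, hence is a legal $(\A+2s,\B-2s)$-move in the $\psi'$-game.

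It remains to verify the nested chain
\[
\psi'(y_k,t_k)\supset\psi(\tilde y_k,t_k+s)\supset\psi(\tilde y_k',t_k+s+\A)\supset\psi'(y_k',t_k+\A+2s)\supset\psi'(y_{k+1},t_{k+1})\,,
\]
which follows by combining the two conversions with Alice's (virtual) winning moves and Bob's compliance with the rules. By (MSG2) the diameters of all these sets tend to zero, so the two games share a single limit point, which lies in $S$ because virtual Alice followed her winning strategy. The "in particular" claim follows by taking any $\B>a_*$, applying the first part with $\B$ replaced by $\B+2s$ (which satisfies $\B+2s>2s$), and concluding that $S$ is $(\A+2s,\B)$-winning for the $\psi'$-MSG for every $\B>a_*$.

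The only real work is the bookkeeping of the time offsets, ensuring every scheduled move is legal in its own game; the algebraic heart of the argument is the single observation that $\Phi_t$ being an automorphism turns \equ{squeeze} into a family of $t$-shifted inclusions at cost $+s$ in the time parameter. No assumption beyond the contraction of $\Phi_t$ and right-invariance of the metric is needed, since we exploit only the set-theoretic containment of the domains.
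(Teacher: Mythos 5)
Your proof is correct and follows essentially the same route as the paper's: reduce the $\psi'$-game to a shadow $\psi$-game by passing back and forth through the squeeze relation \equ{squeeze}, paying $s$ in the time parameter at each crossing, and tracking the time offsets so that every simulated move is legal in its own game. The one thing you do that the paper does not is to exhibit the conversion maps $y\mapsto\Phi_t(x)y$ and $y\mapsto\Phi_t(x')y$ explicitly (justified via the automorphism property of $\Phi_t$), where the paper simply says to ``pick a translate contained in it''; this is a clean way to record why the required translates exist, but it does not change the strategic content of the argument.
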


\begin{proof} We will show how, using an existing $(\A, \B)$-strategy 
of the $\psi$-MSG, one can define an  $(\A + 2s,\B - 2s)$-strategy for the $\psi'$-MSG. 
Given a translate $B'_{k}$ of ${\Phi}_{t}(D_0')$ chosen by Bob at the $k$th step, 
pick a translate $B_{k}$  of ${\Phi}_{t+s}(D_0)$ contained in
it, and then, 
according to the given $\psi$-winning strategy, a translate $A_{k}$ of  
${\Phi}_{t+s + \A}(D_0)$. In the latter one can
find a translate of ${\Phi}_{t+2s + \A}(D_0')$; 
this will be the next choice $A'_{k}$ of Alice. Indeed, any move that could be made by 
Bob in response, that is, a translate $B'_{k+1}$ of 
${\Phi}_{t+2s + a+ b - 2s}(D_0') = {\Phi}_{t+ \A+ \B }(D_0')$ 
inside $A'_{k}$, 
will contain a translate of  ${\Phi}_{t+ \A+ \B + s}(D_0)$, and the latter can be viewed as a move 
responding to $A_{k}$ according to the
$\psi$-strategy. Thus the process can be continued, 
eventually yielding a point from $S$ in the intersection of all the chosen sets. 
\end{proof}

\subsection{A dimension estimate}\name{dimest contr}
Choose  a 
Haar measure $\mu$ on $H$ (note that $\mu$ is  both left- and right-invariant since $H$ is unimodular). Our
next claim is that conditions 
($\mu$1--2) are also satisfied. Indeed, since $D_0$ is admissible, $\mu(D_0)$ is positive, and one has
 \eq{measures}{\mu\big({\Phi}_t(D_0)x\big) = e^{-\delta t}\mu(D_0)\quad\text{for any }x\in H,\,t \in\R\,,} hence ($\mu$1),
where 
$\delta = - \Tr(X)$.
\ignore{Using
Jordan decomposition of $X$ one easily finds that there is a norm $\|
\cdot \|$ on $\R^n$ such that 
\eq{eq: norm squeeze}{\forall x \in \R^n, \ \ \|{\Phi}_{-t}(x)\| \leq
e^{-\lambda t} \|x\|,
}
so (ii) is clear. By admissibility of $D_0$, there is $a_*$ such that
for all $t \geq a_*$, $D_0$ contains 
a translate of ${\Phi}_{-t}(D_0)$. Then 
(i) is satisfied with this value of $a_*$. }
Also, in view  of  \equ{measures} and the definition of $\psi$, to verify ($\mu$2) it suffices to show

\begin{lem}\name{prop: msg3} Let $D_0$ be admissible and let $a_*$ be as in {\rm (MSG0)}. 
Then there exists
$\bar c > 0$ such that for any $b > a_*$, $D_0$ 
 contains essentially disjoint right translates
$D_1, \ldots, D_N$ of ${\Phi}_b(D_0)$  such
that 
\eq{density}{\mu\big(\bigcup_i D_i\big) \geq \bar c \mu(D_0)\,.}
\end{lem}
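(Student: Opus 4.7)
\smallskip

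My plan is to prove the lemma via a Vitali-type maximal packing argument, splitting the analysis into a bounded regime $a_* < b \le B_0$ and a large regime $b > B_0$, for a constant $B_0$ to be chosen. Fix a compact set $K \subset \Int(D_0)$ with $\mu(K) > 0$, and set $\epsilon_0 \df d(K,\partial D_0) > 0$. Since the diameter of $\Phi_b(D_0)$ is at most $c_0 e^{-\sigma b}$ by \equ{dist squeeze} and $\Phi_b(d_0) \to e$ for any fixed $d_0 \in D_0$ by contractivity of $\Phi$, I can pick $B_0 > a_*$ so that $\Phi_b(D_0) \subset B(e,\epsilon_0)$ for all $b \ge B_0$. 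For $a_* < b \le B_0$, condition (MSG0) immediately yields a single right translate $\Phi_b(D_0)y \subset D_0$ of measure $e^{-\delta b}\mu(D_0) \ge e^{-\delta B_0}\mu(D_0)$, settling the bounded regime with $\bar c = e^{-\delta B_0}$.

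For $b > B_0$, let $Y \df \{y \in H : \Phi_b(D_0)y \subset D_0\}$. Right-invariance of $d$ gives $\Phi_b(D_0)y \subset B(e,\epsilon_0)y = B(y,\epsilon_0) \subset D_0$ for every $y \in K$, so $K \subset Y$ and $\mu(Y) \ge \mu(K)$. I then pick a maximal finite collection $\{y_1,\ldots,y_N\} \subset Y$ such that $\{\Phi_b(D_0)y_i\}_{i=1}^N$ are pairwise essentially disjoint subsets of $D_0$; finiteness follows from the budget $Ne^{-\delta b}\mu(D_0) \le \mu(D_0)$. Maximality forces that for every $y \in Y$, some $\Phi_b(D_0)y_i$ meets $\Phi_b(D_0)y$, and solving $\Phi_b(d_1)y = \Phi_b(d_2)y_i$ yields $y \in \Phi_b(D_0^{-1}D_0)y_i$. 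Thus $Y \subseteq \bigcup_{i=1}^N \Phi_b(D_0^{-1}D_0)y_i$, and comparing measures gives
\[
\mu(K) \le \mu(Y) \le N\,e^{-\delta b}\mu(D_0^{-1}D_0),
\]
so the total packing measure satisfies $Ne^{-\delta b}\mu(D_0) \ge \mu(K)\mu(D_0)/\mu(D_0^{-1}D_0)$, a positive constant multiple of $\mu(D_0)$ independent of $b$. Taking $\bar c \df \min\bigl(e^{-\delta B_0},\,\mu(K)/\mu(D_0^{-1}D_0)\bigr)$ yields \equ{density} in both regimes.

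The main obstacle is the uniform-in-$b$ lower bound on $\mu(Y)$, which is where the Lie group structure really enters: one must know that $\Phi_b(D_0)$ not only shrinks in diameter but actually contracts towards the identity, so that right-invariance of $d$ can translate this into control on the position of $\Phi_b(D_0)y$ near $y$. Once this geometric input is in hand, the remainder is a standard Vitali-style packing-versus-covering dichotomy transported into the group setting, with the role of the doubling constant played by the ratio $\mu(D_0)/\mu(D_0^{-1}D_0)$.
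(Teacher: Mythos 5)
Your proof is correct and takes a genuinely different route from the paper. Where the paper invokes the tessellation domain result for simply connected nilpotent Lie groups (Proposition~\ref{lem: tess}, from \cite{KM}) and then controls boundary effects by bounding the measure of an $\epsilon$-neighborhood of $\partial B$, you use a Vitali-style maximal packing: pick a maximal pairwise essentially disjoint family of translates $\Phi_b(D_0)y_i \subset D_0$, then show via the group identity $\Phi_b(d_1)^{-1}\Phi_b(d_2) = \Phi_b(d_1^{-1}d_2)$ that the admissible centers $Y$ are covered by the finitely many sets $\Phi_b(D_0^{-1}D_0)y_i$, and compare measures. Both proofs split into a bounded-$b$ regime (handled by the single translate from (MSG0)) and a large-$b$ regime where a uniform constant is produced. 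Your argument is more self-contained, avoiding the tessellation machinery entirely; what it needs instead is the compactness of $D_0^{-1}D_0$ (hence $\mu(D_0^{-1}D_0) < \infty$) as the analogue of a doubling constant, plus the observation that contractivity pushes $\Phi_b(D_0)$ into a small ball around $e$ so that right-invariance of $d$ gives $\Phi_b(D_0)y \subset B(y,\epsilon_0) \subset D_0$ for $y$ in a fixed compact $K \subset \Int(D_0)$. One minor point worth tightening: the inclusion $B(y,\epsilon_0) \subset D_0$ for $y \in K$ uses that $d$ is a length metric on the Lie group so that any point outside $D_0$ is at distance strictly more than $d(y,\partial D_0)$ from $y$; alternatively, just shrink $\epsilon_0$ by a factor of two to sidestep the closed-ball boundary case.
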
 

Indeed, if this holds, then, in view of  \equ{measures} the conclusion of the lemma
holds with $D_0$ replaced by ${\Phi}_t(D_0)x$ for every $x$ and $t$, and then, by 
(MSG0) and  \equ{measures}, ($\mu$2) holds with $c = \bar c e^{-\delta a}$.
\smallskip

For the proof  of Lemma \ref{prop: msg3}  we use the following result from \cite{KM}:

\begin{prop}\name{lem: tess} Let $H$ be a connected simply connected
nilpotent Lie group. Then for any $r>0$ 
there exists a neighborhood $V$ of
identity in $H$ 
with piecewise-smooth boundary and 
with $\diam(V)<r$,
and a countable subset $\Delta\subset H$ such that $H = \bigcup_{\gamma\in\Delta}\overline{V}\gamma$ and \eq{disj}{V\gamma_1  \cap V\gamma_2 = \varnothing\text{ for different }\gamma_1,
\gamma_2 \in \Delta\,.}
\end{prop}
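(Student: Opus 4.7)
I will prove the proposition by a concrete construction using Malcev coordinates. Since $H$ is simply connected and nilpotent, fix a Malcev basis $X_1, \ldots, X_n$ of $\mathfrak{h}$ adapted to the descending central series $\mathfrak{h} = \mathfrak{h}^1 \supset \mathfrak{h}^2 \supset \cdots \supset \mathfrak{h}^{k+1} = 0$, so that $\mathfrak{h}^i = \operatorname{span}(X_j : j \geq m_i)$ for certain indices $1 = m_1 < m_2 < \cdots$. The associated exponential coordinates of the second kind,
\[
\phi : \R^n \to H, \qquad \phi(t_1, \ldots, t_n) = \exp(t_1 X_1)\exp(t_2 X_2) \cdots \exp(t_n X_n),
\]
form a global diffeomorphism. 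I will choose $\epsilon > 0$ small (depending on $r$) and take
\[
V := \phi\bigl((-\epsilon/2, \epsilon/2)^n\bigr), \qquad \Delta := \{\phi(\epsilon k) : k \in \Z^n\}.
\]

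The key step will be to establish a structural property of right multiplication in these coordinates. Using Baker--Campbell--Hausdorff together with the filtration identity $[\mathfrak{h}^i, \mathfrak{h}^{i'}] \subseteq \mathfrak{h}^{i+i'}$, right multiplication by $\exp(sX_j)$ acts as a polynomial map in $\phi$-coordinates that is ``upper triangular'' in the following precise sense: it fixes $t_l$ for $l < j$, sends $t_j \mapsto t_j + s$, and perturbs each $t_l$ with $l > j$ only by a polynomial in $s$ and in the $t_m$ indexed by $m$ with $X_m$ at strictly lower filtration level than $X_l$. Applying this factor by factor to $\phi(v) \cdot \phi(\epsilon k)$, I obtain a polynomial map $(v, k) \mapsto t$ of the form
\[
t_l = v_l + \epsilon k_l + Q_l\bigl(v_j, \epsilon k_j : X_j \text{ at strictly lower filtration level than } X_l\bigr).
\]

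Granting this, the tessellation properties will follow by solving the system level by level. At the first filtration level ($l < m_2$) the correction $Q_l$ vanishes, so any target $t_l \in \R$ decomposes uniquely as $t_l = v_l + \epsilon k_l$ with $v_l \in [-\epsilon/2, \epsilon/2)$ and $k_l \in \Z$. Inductively, once the coordinates at all strictly lower levels have been fixed, $Q_l$ reduces to a constant at the current level, and the same elementary abelian tessellation of $\R$ pins down $v_l, k_l$ uniquely. This yields simultaneously the covering $H = \bigcup_{\gamma\in\Delta} \overline{V}\gamma$ and the disjointness $V\gamma_1 \cap V\gamma_2 = \varnothing$ for distinct $\gamma_1, \gamma_2 \in \Delta$.

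Finally, $V$ is the image of an open cube under the smooth diffeomorphism $\phi$, so its boundary is piecewise smooth. Since $\phi(0) = e$, the right-invariant Riemannian diameter of $V$ tends to $0$ as $\epsilon \to 0$, so $\epsilon$ can be chosen small enough that $\diam(V) < r$. The main obstacle is the BCH bookkeeping required to establish the upper-triangular structure of right multiplication by $\exp(sX_j)$; once that is in hand, all remaining steps reduce to iterating the elementary tessellation $\R = \bigsqcup_{m \in \Z} \epsilon[m - \tfrac12, m + \tfrac12)$.
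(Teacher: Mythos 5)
The paper does not prove this proposition itself---it defers to \cite[Proposition 3.3]{KM}. Your self-contained construction via Malcev coordinates (exponential coordinates of the second kind) adapted to the lower central series is correct, and as far as I can tell it is the same route taken in that reference. The key structural claim---that in these coordinates right multiplication is polynomial and perturbs the $l$-th coordinate only through coordinates at strictly lower filtration level---holds because a bracket $[X_j,X_m]$ sits at filtration level $\iota(j)+\iota(m)>\iota(m)$ (where $\iota(\cdot)$ denotes the filtration level of a basis vector), so a contribution at level $\iota(l)$ can only involve $m$ with $\iota(m)<\iota(l)$, and re-sorting the resulting correction factors back into Malcev form only produces terms at still deeper level. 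Granting that, your inductive, level-by-level solving of $\phi(v)\,\phi(\epsilon k)=\phi(t)$ with $v\in[-\epsilon/2,\epsilon/2)^n$ and $k\in\Z^n$ yields both the covering $H=\bigcup_{\gamma\in\Delta}\overline V\gamma$ and the disjointness of the open translates $V\gamma$, and the remaining claims (piecewise-smooth $\partial V$ as the image of the cube boundary under the diffeomorphism $\phi$, and $\diam(V)\to 0$ as $\epsilon\to 0$) are immediate. The only step not carried out in full is the BCH bookkeeping behind the triangularity, which you flag explicitly; the filtration-degree observation above is its entire content, so there is no genuine gap.
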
 

For example, if $H = \R^n$ one can take $V$ to be the unit cube,
$$V = \big\{(x_1,\dots,x_n): |x_j|<1/2\big\}\,,$$ and $\Delta = \Z^n$, or rescale both $V$ and $\Delta$
to obtain domains of arbitrary small diameter.
See \cite[Proposition 3.3]{KM} for a proof of the above proposition.

\begin{proof} [Proof of Lemma \ref{prop: msg3}] First note that, since $b$ is assumed to be
greater than $a_*$, $D_0$ 
 contains  at least one translate  of ${\Phi}_b(D_0)$, thus the left hand side of \equ{density}
 is not less than  $e^{-\delta b}\mu(D_0)$. Now, in view of  \equ{measures}, while proving the
lemma one can replace $D_0$ by ${\Phi}_t(D_0)$ for any $t\ge 0$. Thus without loss of generality one
can assume that $D_0$ is contained in $V$ as in Proposition \ref{lem: tess} with $r \le 1$, and that
\equ{dist squeeze} holds $\forall\,g,h\in V$. Now choose a nonempty open ball $B\subset D_0$.
We are going to estimate from below the measure of the union of sets of the form ${\Phi}_b(D_0\gamma)$, where
$\gamma \in \Delta$, 
contained in $B$; they are disjoint in view of 
\equ{disj}. 

Since
\begin{equation*}
\bigcup_{\gamma \in \Delta,\,{\Phi}_b(\overline V\gamma)\subset B}{\Phi}_b(\overline V\gamma)
=
\bigcup_{\gamma \in \Delta,\,{\Phi}_b(\overline V\gamma)\cap B\ne\varnothing}{\Phi}_b(\overline V\gamma)
\ \ \ssm \bigcup_{\gamma \in \Delta,\,{\Phi}_b(\overline V\gamma)\cap \partial B\ne\varnothing}{\Phi}_b(\overline V\gamma)
\,,\end{equation*}
we can conclude that the measure of the set in the left hand side is not less than
$$
\mu(B) - 
\mu\left(\big\{ \diam\big({\Phi}_b( V)\big)\text{-neighborhood of }\partial B\big\}\right).$$
Clearly for any $0 < \vre < 1$  the measure of the  $\vre$-neighborhood of $\partial B$ is bounded from above by $c'\vre$
where $c'$ depends only on $B$. In view of \equ{dist squeeze} and \equ{measures}, it follows that
\begin{equation*}
\begin{split}
\mu\left(
\bigcup_{\gamma \in \Delta,\,{\Phi}_b(D_0\gamma)\subset B}{\Phi}_b(D_0\gamma)\right) 
&\ge \frac{ \mu(D_0)}{ \mu(V)} \left(\mu(B) - c_0 c' e^{-\sigma b} \diam(V)\right)\\ & = \mu(D_0)\left(\frac{ \mu(B)}{ \mu(V)} - \frac{ c_0 c'  \diam(V)}{ \mu(V)} e^{-\sigma b}\right),
\end{split}
\end{equation*}
which is not less than $\frac{ \mu(B)}{ 2\mu(V)} \mu(D_0)$ if $e^{-\sigma b} \le{ \mu(B)}/{ 2c_0c' \diam(V)}$.
Combining this with the remark made in the beginning of the proof, we conclude
that \equ{density} holds with $$\bar c = \min\left( \frac{ \mu(B)}{ 2\mu(V)}, \left(\frac{ \mu(B)}{ 2c_0c' \diam(V)}
\right)^{\delta/\sigma}\right).
$$
\end{proof}

\ignore{
Then (MSG1) obviously holds with $t_* = 0$ and $\delta = \Tr(X)$.   
Also it is easy to see that the norm of ${\Phi}_{-t}$ is
bounded from above by  $P(t)e^{-\lambda t}$,
where $P$ is a polynomial of degree at most $n$  and $\lambda$ is the
real part of an eigenvalue of $X$ with the smallest real part.  
Therefore for every positive $\sigma < \lambda$ there exists $c > 0$ (dependent on $D$ and $\sigma$)
such that (MSG2) holds. The plan is to show that condition (MSG3) is also satisfied; thus any
winning set of the game defined by $\{D_{x,t}\}$ as above will have \hd\ $n$.

\begin{prop}
\name{prop: msg3} Let $D_0$ be admissible,
and let $\{{\Phi}_t\}$ and $\delta$ be as above. Then there exist positive
v$c,s_0$ such that $s\ge s_0$ $\Rightarrow$ $N(s) \ge c e^{\delta s}$. 
\end{prop}

\begin{proof} Note that it suffices to show that whenever $t\ge s_0$, there exist at least 
$c e^{\delta t}$ essentially disjoint 
translates  of ${\Phi}_{-t}(D_0)$ contained in
$D_0$. 
Since $\mu(D_0) > 0$ and $\mu(\partial D_0) = 0$, there exists $\vre > 0$ such that
 measure of the $\vre$-neighborhood of $\partial D_0$ is strictly less than $\mu(D_0)/2$. Now 
 denote $R = \diam(D_0)$ and let $I_0 = [-R/2,R/2]^n$; note that $D_0$ is contained
 in a translate of $I_0$. Then  let $s_0$ be such that
 the diameter of ${\Phi}_{-t}(I_0)$ is less than $\vre$ for any $t >
s_0$. Now choose    a  collection $\{I_k: k\in \N\}$ of  
 essentially disjoint translates of $I_0$ covering $\R^n$, and consider 
 $$
 D' \df \bigcup_{{\Phi}_{-t}(I_k) \cap D_0 \ne \varnothing} {\Phi}_{-t}(I_k) 
 $$
 and
 $$
 D'' \df \bigcup_{{\Phi}_{-t}(I_k) \subset D_0} {\Phi}_{-t}(I_k) 
 $$ 
 Clearly $\mu(D') \ge \mu(D_0)$ and, since $D'\ssm D''$ is contained
in the  $\vre$-neighborhood of $\partial D_0$, it follows that $m(D'')
\ge \mu(D_0)/2$. But the boxes ${\Phi}_{-t}(I_k) $  are essentially
disjoint,  
 therefore $D''$ consists of at least
 $$
  \mu(D'')/ \mu\big( {\Phi}_{-t}(I_0)\big) = e^{\delta t} \mu(D'')/
\mu(I_0)\ge \frac{ \mu(D_0)}{2 \mu(I_0)}e^{\delta t} 
 $$
 of them. It remains to observe that each of them is contained in
$D_0$ and contains a translate of ${\Phi}_{-t}(D_0)$, which yields the
desired conclusion. 
\end{proof}
}

In view of the discussion preceding Proposition \ref{prop: msg3},
an application of Theorem \ref{thm: WD} yields
\begin{cor}\name{cor: msg3}
Any  winning set for the MSG induced by $\mathcal{F}$ as above is thick. 
\end{cor}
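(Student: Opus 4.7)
The plan is to combine Theorem \ref{thm: WD} with the verifications made in \S\ref{def contr} and \S\ref{dimest contr}. First, bookkeeping: (MSG0--2) for the $\mathcal{F}$-MSG were established at the end of \S\ref{def contr}; ($\mu$1) for Haar measure $\mu$ is immediate from \equ{measures}; and ($\mu$2) follows from Lemma \ref{prop: msg3} as explained immediately after its statement, with the constant $c = \bar c\,e^{-\delta \A}$ depending on $\A$ but crucially \emph{not} on $\B$.

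Second, I need $\underline{d}_\mu(U) = \dim H$ for every nonempty open $U \subset H$. Because the metric $d$ and the measure $\mu$ are both right-invariant, $\mu(B(x,r)) = \mu(B(e,r))$ for every $x\in H$ and $r>0$. Since $H$ is a Lie group of dimension $n \df \dim H$, the Riemannian exponential chart around $e$ is an approximate isometry on a small ball, furnishing constants $c_1, c_2, r_0 > 0$ with
\begin{equation*}
c_1 r^n \le \mu(B(e,r)) \le c_2 r^n \quad \text{for all } 0 < r \le r_0.
\end{equation*}
By right-invariance the same bound holds with $e$ replaced by any $x \in H$, i.e.\ $\mu$ satisfies the power law \equ{pl} with exponent $n$. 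Hence $\underline{d}_\mu(x) = n$ for every $x \in H$, and $\underline{d}_\mu(U) = n$ for every nonempty open $U$.

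Third, I would apply Theorem \ref{thm: WD}. Let $S$ be winning for the $\mathcal{F}$-MSG, so that $S$ is $\A$-winning for some $\A > a_*$, hence $(\A,\B)$-winning for every $\B > a_*$. For any fixed nonempty open $U \subset H$, Theorem \ref{thm: WD} yields
\begin{equation*}
\dim(S \cap U) \ge n + \frac{1}{\sigma}\cdot\frac{\log c}{\A + \B}.
\end{equation*}
Since the left-hand side does not depend on $\B$, and the error term (negative because $c < 1$) tends to $0$ as $\B \to \infty$, we conclude $\dim(S \cap U) \ge n = \dim H$. As $U$ was arbitrary, $S$ is thick.

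The only delicate point in this argument is the $\B$-independence of the constant $c$ in ($\mu$2); without it, the limit $\B \to \infty$ would not produce the full-dimension bound. Fortunately, this independence is exactly what Lemma \ref{prop: msg3} provides, since the constant $\bar c$ in its conclusion depends only on the admissible set $D_0$.
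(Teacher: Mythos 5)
Your proof is correct and follows the same route the paper intends: verify (MSG0--2) and ($\mu$1--2) in the Lie-group setting, feed them into Theorem \ref{thm: WD}, and let $\B\to\infty$. The only thing you spell out that the paper leaves implicit is the (standard but necessary) fact that Haar measure on $H$ with a right-invariant Riemannian metric satisfies a power law with exponent $\dim H$, so that $\underline{d}_\mu(U) = \dim H$ for all nonempty open $U$; your exponential-chart-plus-invariance argument for this is correct. You also correctly flag the $\B$-independence of the constant $c$ in ($\mu$2) as the hinge of the limiting argument, which matches the structure of Lemma \ref{prop: msg3}.
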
 

\subsection{Images of winning sets}\name{images contr} 
One of the nice features of the original Schmidt's game is the stability of the
class of its winning sets under certain maps, see e.g.\ \cite[Theorem 1]{Schmidt games}
or \cite[Proposition 5.3]{Dani survey}. We close this section 
by describing some self-maps of $H$ which send winning sets of the
game induced by $ \FF$ to winning sets:

\commargin{I left just one combined statement, which is likely to be used in the future
for the bounded orbits business; please check the proof!}
\begin{prop}\name{prop: affine commuting} Let $\varphi$ be an  automorphism of $H$
commuting with $\Phi_t$ for all $t$. Then  there exists $s > 0$ (depending on $\varphi$ and the choice
of an admissible $D_0$) such that the following holds. Take $t_0\in\R$ and $x_0\in H$, 
and consider 
\eq{def f}{
f:H\to H, \ x\mapsto \Phi_{t_0}\big( \varphi (x)\big)x_0\,.
}
\commargin{Maybe it is worth mentioning that
some mysterious more general classes of maps can be considered,
or maybe it is clear from the proof anyway}

Then for any $a > a_*$, $b > a_* + 2s$ and 
any $S\subset H$ which is $(a,b)$-winning for the MSG induced by $\FF$, the set
$f(S)$ is $(a+2s, b-2s)$-winning for the same game.
\end{prop}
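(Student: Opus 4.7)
The plan is to establish an equivariance property of $f$ between games with different base domains, and then to absorb the resulting change of base via Proposition \ref{prop: initial domain}.

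First I would compute how $f$ acts on $\psi$-balls. Because $\varphi$ is a group automorphism commuting with every $\Phi_t$, and $\Phi_{t_0}$ is itself a group automorphism, one finds for any $y \in H$ and $t \in \R$,
\[f\bigl(\Phi_t(D_0)y\bigr)
= \Phi_{t_0}\bigl(\varphi(\Phi_t(D_0))\bigr)\,\Phi_{t_0}\bigl(\varphi(y)\bigr)\,x_0
= \Phi_{t+t_0}(\varphi(D_0))\,f(y).\]
Setting $D_0' \df \varphi^{-1}(D_0)$, which is again admissible since $\varphi^{-1}$ is a homeomorphism of $H$, the same calculation yields $f\bigl(\Phi_t(D_0')y\bigr) = \Phi_{t+t_0}(D_0)\,f(y)$. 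Thus $f$ is a homeomorphism of $H$ which bijectively carries $\psi'$-balls at scale $t$ (those built from $D_0'$) to $\psi$-balls at scale $t+t_0$ (those built from $D_0$), where $\psi, \psi'$ are the rules from \equ{def psi} associated to $D_0, D_0'$.

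Second, I would transport strategies along this bijection. Because $f$ preserves set inclusion and shifts the scale parameter by the fixed constant $t_0$, the allowed $(a,b)$-moves in the $\psi'$-MSG correspond one-to-one to the allowed $(a,b)$-moves in the $\psi$-MSG, and the intersection \equ{intmodified} of a $\psi'$-play maps under $f$ to the intersection of the corresponding $\psi$-play. Consequently, for any $a, b > a_*$,
\[S\text{ is }(a,b)\text{-winning for the }\psi'\text{-MSG}\ \Longleftrightarrow\ f(S)\text{ is }(a,b)\text{-winning for the }\psi\text{-MSG}.\]

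Finally, I would invoke Proposition \ref{prop: initial domain} to bridge the two base domains. Since $D_0$ and $D_0'=\varphi^{-1}(D_0)$ are both admissible, the contraction estimate \equ{dist squeeze} produces some $s>0$ depending only on $\varphi$ and $D_0$ for which the inclusions \equ{squeeze} hold; this is the $s$ in the statement. Given $S$ which is $(a,b)$-winning for the $\psi$-MSG with $b>a_*+2s$, Proposition \ref{prop: initial domain} upgrades $S$ to an $(a+2s, b-2s)$-winning set for the $\psi'$-MSG, and the equivalence from Step~2 then yields that $f(S)$ is $(a+2s, b-2s)$-winning for the $\psi$-MSG, as required.

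The main obstacle is Step~2: one has to confirm that a $(a,b)$-strategy genuinely transfers through $f$, which works precisely because Alice and Bob control their moves via the \emph{increments} $a$ and $b$ of the scale parameter, so the uniform additive shift by $t_0$ drops out. The computation in Step~1 is the key technical input, while Step~3 is essentially bookkeeping once the preceding pieces are in place.
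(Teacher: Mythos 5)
Your proof is correct, but it takes a genuinely different route from the paper's. The paper runs a direct simulation ("shadow game") argument: Alice plays the $(a+2s,b-2s)$-game targeting $f(S)$ while invisibly maintaining clones $\til{\text{Alice}}$, $\til{\text{Bob}}$ playing the $(a,b)$-game targeting $S$; at each step it passes through $f^{-1}$, shrinks by $\Phi_s$, applies the $(a,b)$-strategy, applies $f$, and shrinks by $\Phi_s$ again — paying the $2s$ and $t_0$ costs simultaneously inside a single induction. Your argument cleanly factors this into two independent pieces: first, the exact equivariance $f\bigl(\Phi_t(D_0')y\bigr)=\Phi_{t+t_0}(D_0)f(y)$ with $D_0'=\varphi^{-1}(D_0)$, which shows $f$ transports $\psi'$-plays to $\psi$-plays with \emph{no} loss in parameters (the scale shift $t_0$ cancels because the game is controlled by increments); second, the change of base domain $D_0\to D_0'$, absorbed by Proposition \ref{prop: initial domain} at the cost of $2s$. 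The $s$ you obtain from \equ{squeeze} with $D_0'=\varphi^{-1}(D_0)$ is in fact the same $s$ as in the paper's proof (apply $\varphi$ to the second inclusion to see it coincides with "some translate of $\Phi_s(D_0)$ lies in $\varphi(D_0)$"). Your factored version is arguably cleaner and more reusable, since the equivariance in Step~1 holds at no cost and isolates where the $2s$ really enters; the paper's inline simulation is more self-contained and avoids introducing the auxiliary domain $D_0'$. Both are valid.
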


\begin{proof} Since $D_0$ is admissible and $\varphi$ is a homeomorphism, 
there exists $s > 0$ such that some translates of both $\varphi(D_0)$ and $\varphi^{-1}(D_0)$
contain $\Phi_s(D_0)$.   Then, for  $f$ as in \equ{def f}, since $\varphi$ is a group homomorphism
and $\Phi_t\circ \varphi = \varphi \circ \Phi_t$ for all $t$,  it follows that
\eq{bilipschitz}{ 
\begin{aligned}\text{for any }t \in \R\text{ and }x \in H \quad\exists\, x', x'' \in 
H\text{ such that}\quad\qquad\\
f\big(\Phi_{t}(D_0)x \big)\supset \Phi_{t+t_0+s}(D_0)x' ,\ 
f^{-1}\big(\Phi_{t}(D_0)x \big)\supset
\Phi_{t-t_0+s}(D_0)x''\,. 
\end{aligned}
}
Suppose that Alice and Bob are playing the game with  parameters $(\A+2s,
\B-2s)$ and target set  $f(S)$. Meanwhile their clones  $\til{\text{Alice}}$ 
and  $\til{\text{Bob}}$ are playing  with parameters
$(\A,\B)$, and 
we are given a strategy for $\til{\text{Alice}}$ to win on $S$.
Let $B_k = \Phi_t(D_0)x$ be a move made by Bob
at the $k$th stage of the game. Thus by
\equ{bilipschitz}, 
$f^{-1}(B_k)$ contains a set $\til B_k = \Phi_{t-t_0+s}(D_0)y$ for some $y\in H$. 
Then, in response to $\til 
B_k$ as if it were  $\til{\text{Bob}}$'s choice, $\til{\text{Alice}}$'s strategy 
specifies $\til A_k=\Phi_{t-t_0+s+a}(D_0)y' \subset \til B_k$, a move which ensures convergence to a point of $S$. 
Again by \equ{bilipschitz}, the set $f(\til A_k)$
contains $A_k = \Phi_{t+a+2s}(D_0)x'$ for some $x'\in H$, which can be chosen by Alice as her next move. Now for any choice made by Bob of $$B_{k+1} = \Phi_{t+a+2s+ b-2s}(D_0)z= \Phi_{t+a+ b}(D_0)z\subset A_k$$
Alice can proceed as 
above, since $f^{-1}(B_{k+1})$  will contain a valid  
 move for $\til{\text{Bob}}$ in response to   $\til A_k$. Continuing this way, Alice can enforce
$$\bigcap_{k = 1}^\infty  A_k = \bigcap_{k = 1}^\infty   f(\til A_k) = f\left(\bigcap_{k = 1}^\infty   \til A_k\right)\in f(S)\,,$$
winning the game.
\end{proof}

\ignore{
Putting these together we obtain:

\begin{cor}\name{cor: putting together}
If $f_1, f_2, \ldots$ are $(\FF, D_0, s)$-bilipschitz (for some fixed
$s$), and $g_1, g_2, 
\ldots$ are $\psi$-dilations for $\psi$ as in \equ{def psi}, then for any
winning set $S$, $\bigcap_i f_i \circ g_i (S)$ is also winning.  

\end{cor}
}

\section{$\BA(\vr)$ is winning}\name{sec: win}
In this section we take $H = \R^n$ and prove Theorem \ref{thm: main},
 that is, exhibit a strategy
for the MSG induced by $\mathcal{F}^{(\vr)}$ as in \equ{defn ar} which ensures that
Alice can always zoom to a point in $\BA(\vr)$. 
Our argument is similar
to that from \cite{PV-bad}, which in turn is based on 
 ideas of Davenport.
In view of remarks made at the end of the previous section, we can make an arbitrary choice 
for the initial admissible domain $D_0$, and will choose it to be the unit cube in $\R^n$, so that
translates of ${\Phi}^{(\vr)}_t(D_t)$ are  boxes with sidelengths $e^{-(1+r_1)t},\dots,e^{-(1+r_n)t}$.
The main tool will be the so-called `simplex lemma', the idea of which is attributed
to Davenport in  \cite{PV-bad}. Here is a version suitable for our purposes. 

\begin{lem}
\name{lem: simplex} Let $D\subset \R^n$ be a  box  with sidelengths
$\rho_1,\dots,\rho_n$, and for $N > 0$ denote by $\,\mathcal{Q}(N)$
the set of rational vectors $\p/q$  
written in lowest terms with $0 < q  < N$. Also let  ${f}$ be a
nonsingular affine transformation  of $\R^n$ and 
$J$ the Jacobian of ${f}$ (that is, the absolute value of the determinant of
its linear part). Suppose that \eq{smallvolume}{\rho_1\cdots\rho_n <
\frac{J}{n!N^{n+1}}\,.} 
Then there exists an affine hyperplane $\mathcal{L}$ such that
${f}\big(\mathcal{Q}(N)\big)\cap D \subset\mathcal{L} $. 
\end{lem}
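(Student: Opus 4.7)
The plan is to reduce the statement to a volume comparison after pulling $D$ back by $f$. Set $D^* \df f^{-1}(D)$; this is a parallelepiped of volume
$$\mathrm{vol}(D^*) = \frac{\rho_1\cdots\rho_n}{J},$$
and the conclusion is equivalent to showing that $\QQ(N)\cap D^*$ is contained in a single affine hyperplane. The hypothesis \equ{smallvolume} becomes $\mathrm{vol}(D^*) < \frac{1}{n!N^{n+1}}$.

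I will argue by contradiction: assume $\QQ(N)\cap D^*$ affinely spans $\R^n$. Then I can select $n+1$ affinely independent vectors $\p_0/q_0,\ldots,\p_n/q_n$ in $\QQ(N)\cap D^*$. These span an $n$-dimensional simplex $\Sigma$, and since $D^*$ is convex, $\Sigma\subset D^*$, so
$$\mathrm{vol}(\Sigma) \le \mathrm{vol}(D^*) < \tfrac{1}{n!N^{n+1}}.$$

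The main step is a matching lower bound for $\mathrm{vol}(\Sigma)$. Standard formula gives
$$\mathrm{vol}(\Sigma) = \frac{1}{n!}\left|\det M\right|, \qquad M = \begin{pmatrix} 1 & \p_0^T/q_0 \\ \vdots & \vdots \\ 1 & \p_n^T/q_n\end{pmatrix}.$$
Multiplying the $i$th row by $q_i$ turns $M$ into an integer matrix, so $q_0q_1\cdots q_n\det M$ is an integer; affine independence makes it nonzero, hence of absolute value at least $1$. Since each $q_i < N$, this gives
$$\mathrm{vol}(\Sigma) \geq \frac{1}{n!\,q_0q_1\cdots q_n} > \frac{1}{n!N^{n+1}},$$
contradicting the upper bound above. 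Therefore $\QQ(N)\cap D^*$ is contained in some affine hyperplane $f^{-1}(\LL)$, and applying $f$ yields the conclusion.

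I do not foresee a real obstacle: the argument is essentially a pigeonhole/volume estimate combined with the elementary observation that an integer determinant is either zero or at least $1$ in absolute value. The only point requiring a slight bit of care is justifying that the simplex fits inside $D^*$, which uses convexity of the parallelepiped $D^* = f^{-1}(D)$; this is automatic because $D$ is a box and $f$ is affine.
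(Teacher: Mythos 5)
Your proof is correct and takes the same route as the paper: the paper simply says ``apply \cite[Lemma 4]{KTV} to $f^{-1}(D)$,'' which is exactly your reduction to the parallelepiped $D^*=f^{-1}(D)$ followed by the simplex--determinant volume estimate that constitutes the proof of that cited lemma. Nothing is missing; you have just unfolded the citation into a self-contained argument.
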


\begin{proof} Apply \cite[Lemma 4]{KTV} to the set ${f}^{-1}(D)$.
\ignore{The claim holds trivially if $\#(\mathcal{Q}(N)\cap D) \le n$. Otherwise, denote $n+1$
different elements of $\mathcal{Q}(N)\cap D$ by $\p^{(0)}/q^{(0)},\dots,\p^{(n)}/q^{(n)}$,
assume they do not lie on a single affine hyperplane, 
and let $\Delta$ be the $n$-dimensional simplex subtended by them. Then  $n!m(\Delta)$  is equal to 
the absolute value of the determinant
\eq{det}{
\left|\begin{matrix}1 & p^{(0)}_1/q^{(0)} & \dots & p^{(0)}_n/q^{(0)}
\\1 & p^{(1)}_1/q^{(1)} & \dots&  p^{(1)}_n/q^{(1)} \\   \vdots&
\vdots& \vdots& \vdots\\1 & p^{(n)}_1/q^{(n)} & \dots &
p^{(n)}_n/q^{(n)} \end{matrix}\right|  
 \,.
}
By assumption, the above determinant is nonzero; hence its absolute value is at least
$1/q_0\cdots q_n > N^{-(n+1)}$. Since $\Delta \subset D$, we get
$
N^{-(n+1)} < n!m(D) = n!\rho_1\cdots\rho_n$, contradicting \equ{smallvolume}.}
\end{proof}

Now let us state a strengthening of Theorem \ref{thm: main}:

\begin{thm}\name{thm: precise}
Let  $\vr$ be as in \equ{defn r},  ${\Phi}_t = {\Phi}^{(\vr)}_t$ as in
\equ{defn ar}, and $\psi$ as in \equ{def psi} where $D_0 = [-1/2,1/2]^n$.
 Then for any nonsingular affine transformation ${f}$ of $\R^n$
whose linear part commutes with ${\Phi}_1$ and any $a > \max_i\frac{\log 2}{1 + r_i}$,
${f}\big(\BA(\vr)\big)$ 
is an $a$-winning set for the $\psi$-MSG.
\end{thm}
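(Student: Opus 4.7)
The plan is to show that for every $b > 0$ one can choose a constant $c = c(a, b, f, \vr) > 0$ and an explicit strategy making $f(\BA(\vr))$ reachable in the $(a,b)$-game for the $\psi$-MSG. For such $c$, set
\begin{equation*}
\Delta_c(\p/q) \df \{\y \in \R^n : |y_i - p_i/q| < c^{r_i}/q^{1+r_i}\text{ for every } i\};
\end{equation*}
then $\y\in \BA(\vr)$ with constant $c$ iff $\y\notin \Delta_c(\p/q)$ for every $(\p,q)\in\Z^n\times\N$, so $\x\in f(\BA(\vr))$ iff $\x\notin f(\Delta_c(\p/q))$ for every $(\p,q)$. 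Alice's task reduces to forcing the limit point $\x_\infty$ of the game to avoid every such set $f(\Delta_c(\p/q))$.

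Alice's strategy will maintain the invariant that, at the start of stage $k$, Bob's box $B_k$---a translate of $\Phi^{(\vr)}_{t_k}(D_0)$ with $i$-th sidelength $e^{-(1+r_i)t_k}$---is disjoint from $f(\Delta_c(\p/q))$ for every $q < N_{k-1}$, where $N_k \df \lfloor \eta\, e^{t_k}\rfloor$ and $\eta>0$ is a constant slightly smaller than $(J/n!)^{1/(n+1)}$ with $J = |\det \Lambda|$ the Jacobian of the linear part $\Lambda$ of $f$. Since $\sum_i(1+r_i) = n+1$, the sidelengths of $B_k$ multiply to $e^{-(n+1)t_k}$, so Lemma~\ref{lem: simplex} applies to $B_k$ (or, to handle rationals near but not in $B_k$, to a slight enlargement containing the center of every $f(\Delta_c(\p/q))$ that meets $B_k$), producing an affine hyperplane $\mathcal{L}_k$ containing every relevant $f(\p/q)$ with $q < N_k$. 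Alice then picks $A_k \subset B_k$, a translate of $\Phi^{(\vr)}_{t_k+a}(D_0)$, disjoint from $\bigcup_{N_{k-1}\le q < N_k} f(\Delta_c(\p/q))$, which lies in a thin slab around $\mathcal{L}_k$.

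The key technical point is verifying that $A_k$ can be so placed. Let $\mathbf{n}$ be a unit normal to $\mathcal{L}_k$. Since $\Lambda$ commutes with $\Phi^{(\vr)}_1$, it preserves the $\Phi_t$-eigenspace decomposition, so each $f(\Delta_c(\p/q))$ is contained in an axis-aligned box centered at $f(\p/q)$ with $i$-th sidelength $O(c^{r_i}/q^{1+r_i})$. Consequently the slab has half-width $\epsilon \le C_1 \sum_i |n_i|\, c^{r_i} / N_{k-1}^{1+r_i}$ for some $C_1 = C_1(f)$. Writing $S_i = e^{-(1+r_i)t_k}$ and $s_i = e^{-(1+r_i)(t_k+a)}$, a one-dimensional packing argument (projecting everything onto $\mathbf{n}$) shows that $A_k$ can be fit into $B_k$ missing the slab provided
\begin{equation*}
\sum_i |n_i|\,(S_i - 2 s_i) \;>\; 2\epsilon.
\end{equation*}
Using $N_{k-1} \ge \tfrac{\eta}{2} e^{t_k - a - b}$, this reduces term-by-term to
\begin{equation*}
1 - 2 e^{-(1+r_i) a} \;>\; C_2\, c^{r_i}\, e^{(1+r_i)(a+b)} \qquad (1 \le i \le n)
\end{equation*}
for a constant $C_2 = C_2(f, \vr, n)$. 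The hypothesis $a > \max_i \tfrac{\log 2}{1+r_i}$ makes every left side strictly positive, whence we may fix $c > 0$ small enough that all these inequalities hold. This geometric step---and the extraction from it of the sharp threshold on $a$---is the main obstacle in the proof.

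With $c$ so chosen, an induction on $k$ preserves the invariant: $B_{k+1}\subset A_k\subset B_k$ inherits disjointness from $f(\Delta_c(\p/q))$ for $q < N_{k-1}$, and acquires it for $q\in [N_{k-1}, N_k)$ from Alice's choice of $A_k$. By Lemma~\ref{lem: dummy} we may assume $t_1$ is as large as we wish (using dummy moves), making the base case vacuous. Since $N_k \to \infty$, the limit point $\x_\infty \in \bigcap_k B_k$ avoids every $f(\Delta_c(\p/q))$, so $f^{-1}(\x_\infty) \in \BA(\vr)$ and hence $\x_\infty \in f(\BA(\vr))$, as required.
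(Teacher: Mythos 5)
Your argument follows the same blueprint as the paper's proof: Davenport's simplex lemma applied to the box chosen by Bob at each stage, an induction in which new denominators $q$ enter a fixed multiplicative range per round, and a geometric dodge of the resulting hyperplane using the hypothesis $a>\max_i \log 2/(1+r_i)$ so that each side of $A_k$ is shorter than half the corresponding side of $B_k$. Your projection of the slab onto the unit normal $\mathbf n$ and the term-by-term reduction are equivalent to the paper's coordinate-wise estimate, and lead to the same threshold on $a$.

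There is, however, a genuine gap in the initialization. You claim that $c$ can be chosen depending only on $a,b,f,\vr$, and that ``by Lemma \ref{lem: dummy} we may assume $t_1$ is as large as we wish\ldots making the base case vacuous.'' But once $c$ is fixed, the sets $f(\Delta_c(\p/1))$ have sidelengths comparable to $c^{r_i}$, a quantity which does not shrink, while $B_1$ has sidelengths $e^{-(1+r_i)t_1}$, which Bob can make arbitrarily small. For $t_1$ large, Bob can choose $B_1$ entirely inside some $f(\Delta_c(\p/1))$; then no translate $A_1\subset B_1$ escapes it, and the invariant fails at stage $2$. Dummy moves do not help: they shrink $B_k$ further, which only makes the first real dodge harder. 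Equivalently, your estimate $\epsilon\le C_1\sum_i|n_i|c^{r_i}/N_{k-1}^{1+r_i}$ combined with $N_{k-1}\ge\tfrac{\eta}{2}e^{t_k-a-b}$ cancels the dependence on $t_k$ only for $k\ge 2$; at $k=1$ with $N_0=1$ the requirement becomes $1-2e^{-(1+r_i)a}>2C_1 c^{r_i}e^{(1+r_i)t_1}$, which forces $c$ to depend on $t_1$.

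The repair is exactly what the paper does (modulo a small notational slip there between $t_0$ and $t_1$): let the target constant $c'$ in \equ{c1} depend on Bob's actual initial time $t_1$, not merely on a lower bound $t_0$ for it. This is legitimate because $f(\BA(\vr))=\bigcup_{c>0}\{\vy:\text{\equ{shifted} holds with constant }c\}$, so Alice is free to fix her target constant \emph{after} seeing Bob's opening move $(\omega_1,b)$. With $c$ allowed to depend on $t_1$, your term-by-term inequality and your bound on the ``slight enlargement'' required for Lemma \ref{lem: simplex} can both be arranged, and the induction goes through; as a by-product $\eta$ should be taken slightly smaller than $\bigl(J/(2^n n!)\bigr)^{1/(n+1)}$ rather than $(J/n!)^{1/(n+1)}$ to absorb the doubling of the box, matching the paper's condition \equ{t1}.
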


\commargin{I decided against a special corollary, but can put it back if you want}
We remark that in this case $\A$ can be chosen 
independently of the linear part of $f$; note that this is not guaranteed by 
a general result as in Proposition  \ref{prop: affine commuting}, but relies on special properties 
of the set $\BA(\vr)$. Consequently, in view of 
Theorem \ref{thm: countable general}, for any sequence $\{L_i\}$ of nonsingular diagonal 
matrices and a sequence $\{\vy_i\}$ of vectors in $\R^n$, the intersection
$\bigcap_{i=1}^\infty \left(L_i\big(\BA(\vr)\big) + \vy_i\right)$
is also $\A$-winning, hence thick.

\begin{proof}%
[Proof of Theorem \ref{thm: precise}]
 We first claim that
$\vy \in {f}\big(\BA( \vr)\big) $ if and 
only if there is $c'>0$ such that 
\eq{shifted}{\max_{1 \leq i \leq n} \left|y_i-{f}\left(\frac{\p}{q}
\right)_i
\right| \geq \frac{c'}{q^{1+r_i}} 
}
for all $\p \in \Z^n$ and $q \in \N$ (here ${f}(\mathbf{x})_i$
denotes the $i$th coordinate of ${f}(\mathbf{x})$). 

To see this,
let $\vr = 
(r_i)$ be as in \equ{defn r}, and let 
\eq{decomposition}{\R^n = \bigoplus V_j}
be the eigenspace decomposition for ${\Phi}_1$. Letting $\{\E_i\}$ denote the
standard basis of $\R^n$, we have that $V_j = \spa \left( \E_i
:i \in I_j\right)$ where $I_j$ is a maximal subset of $\{1, \ldots,
n\}$ with $r_i$ 
the same for all $i \in I_j$. 
Since the
linear part of ${f}$ commutes with ${\Phi}_1$, it preserves each
$V_j$, so that we may write 
$${f}(\x) = \x_0 + \sum_j A_jP_j(\x),$$
where $\x_0 \in \R^n$, $P_j$ is the projection onto $V_j$ determined
by the direct sum decomposition \equ{decomposition}, and $A_j : V_j
\to V_j$ is an invertible linear map. Let $K$ be a positive constant
such  that for each $\x \in V_j$,
$$\frac{1}{K} \|\x \| \leq \|A_j \x\| \leq K\|\x\|.$$
With these choices it is easy to show that 
\equ{defn rba} 
implies \equ{shifted} for $\vy = {f}(\x)$ with $c' = c^{\max r_i}/K$, and
similarly that \equ{shifted} for $\vy = {f}(\x)$ implies
\equ{defn rba} 
with $c= \left({c'}/{K} \right)^{1+ \max r_i^{-1}}.$

Let us fix $a > \max_i\frac{\log 2}{1 + r_i}$ and ${t_0} > 0$ such that
\eq{t1}
{e^{-{t_0}(n+1)} <  \frac J {2^n n!} \,,}
where 
$J$ is the Jacobian of ${f}$. We will specify a strategy for 
Alice. 
Bob
makes a choice of (arbitrarily large) $b$ and an initial rectangle, that
is, a translate $B_1$  of ${\Phi}_{t_1}(D_{0})$ 
for some $t_1$ which we demand to be at least ${t_0}$ (the latter is justified by Lemma \ref{lem: dummy}).
We then choose a positive constant $c'$ such that
\eq{c1}{e^{(a+b)(1+r_i)}c' < 
\left
(\tfrac12 -
e^{-a(1+r_i)}
\right)e^{-t_0(1+r_i)}
} 
for each $i$ (we remark that $\tfrac12 - e^{-a(1+r_i)}$ is positive because of the choice of $a$).
Our goal will be to prove the following

\begin{prop}\name{prop: induction} For any choices of $B_1,\dots,B_k$ made by Bob
it is possible for Alice to choose $A_{k}\subset B_{k}$ such that whenever
$\vy\in A_{k}$, inequality \equ{shifted} 
holds 
 for all $\p,q$ with $0 < q < e^{(k-1)(a+b)}$.
\end{prop}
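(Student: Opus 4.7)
Induct on $k$. The base case $k = 1$ is trivial: the constraint $0 < q < 1$ is vacuous in $\N$, so Alice can take $A_1$ to be any sub-box of $B_1$ of the prescribed size, which is allowed by (MSG0). For $k \ge 2$, set $N_k \df e^{(k-1)(a+b)}$ and assume Alice has already constructed $A_1,\ldots,A_{k-1}$ so that every $\vy \in A_{k-1}$, and hence every $\vy \in B_k$, satisfies \equ{shifted} for all $\p, q$ with $q < N_{k-1}$. It remains to secure \equ{shifted} on $A_k$ for $q$ in the new range $N_{k-1} \le q < N_k$.

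Apply Lemma \ref{lem: simplex} to $D = B_k$ with $N = N_k$. Since $B_k$ has side lengths $\rho_i = e^{-(1+r_i)t_k}$ and $\sum_i (1+r_i) = n+1$, the volume hypothesis \equ{smallvolume} simplifies to $e^{-(n+1)t_1} < J/n!$, which follows from $t_1 \ge t_0$ and \equ{t1}. Consequently every $f(\p/q) \in B_k$ with $q < N_k$ lies on a single affine hyperplane $\mathcal{L}$. Fix a normal $\vn$ to $\mathcal{L}$ with $\max_i |n_i| = 1$ attained at some index $i^*$, express $\mathcal{L}$ as the graph $y_{i^*} = g\bigl((y_j)_{j \ne i^*}\bigr)$ of an affine function whose partial derivatives all have absolute value at most $1$, and set $h(\vy) = y_{i^*} - g\bigl((y_j)_{j \ne i^*}\bigr) = \sum_j a_j y_j + \mathrm{const}$, so that $|a_{i^*}| = 1$ and $|a_j| \le 1$. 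A short computation shows that whenever $\vy$ lies in the bad box of half-widths $c'/q^{1+r_j}$ around some $f(\p/q) \in \mathcal{L}$, one has $|h(\vy)| \le \sum_j |a_j|\, c'/q^{1+r_j} \le \sum_j |a_j|\, c'/N_{k-1}^{1+r_j}$.

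Alice's strategy is to force $|h|$ on $A_k$ to exceed this upper bound. She picks $A_k$ to be the sub-box of $B_k$ with the prescribed side lengths $\rho_j' = e^{-(1+r_j)(t_k+a)}$ sharing the corner of $B_k$ at which $|h|$ is maximised. Since the range of the affine function $h$ over $B_k$ equals $\sum_j |a_j|\rho_j$, this maximum is at least $\tfrac12 \sum_j |a_j|\rho_j$, and the range of $h$ over $A_k$ is only $\sum_j |a_j|\rho_j'$, so
$$\min_{A_k} |h| \;\ge\; \sum_j |a_j|\bigl(\rho_j/2 - \rho_j'\bigr) \;=\; \sum_j |a_j|\, e^{-(1+r_j)t_k}\bigl(\tfrac12 - e^{-a(1+r_j)}\bigr),$$
with each factor $\tfrac12 - e^{-a(1+r_j)}$ strictly positive thanks to the standing hypothesis $a > \log 2 / (1+r_j)$; this positivity also guarantees that $|h|$ does not change sign across $A_k$. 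Verifying termwise that this lower bound exceeds $\sum_j |a_j|\, c'/N_{k-1}^{1+r_j}$, and then substituting $t_k = t_1 + (k-1)(a+b)$ with $t_1 \ge t_0$, reduces the desired inequality exactly to \equ{c1}. Hence $A_k$ misses every bad box, completing the induction.

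The principal obstacle is the geometric step of recognising $h$ as the correct quantity in which both Alice's maneuvering room inside $B_k$ and the transverse extent of every bad box are measured; once $h$ is identified, the constant $c'$ supplied by \equ{c1} is calibrated so that the two estimates match exactly at each stage of the induction.
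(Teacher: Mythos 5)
Your geometric mechanism — extracting a hyperplane $\mathcal{L}$ via the simplex lemma and then maximizing a linear functional $h$ vanishing on $\mathcal{L}$ over $B_k$ — is essentially the same as the paper's ``choose $A_k$ to maximize distance from $\mathcal{L}$,'' and your calibration of $c'$ against \equ{c1} is sound. However, there is a genuine gap in the way you invoke Lemma \ref{lem: simplex}: you apply it to $D = B_k$ and conclude that every $f(\p/q) \in B_k$ with $q < N_k$ lies on $\mathcal{L}$. That controls only bad boxes centered at points of $B_k$. But Alice must guarantee that $A_k$ avoids \emph{every} bad box that intersects $A_k$, and a rational point $f(\p/q)$ can lie strictly outside $B_k$ while its bad box (of half-widths $c'/q^{1+r_i}$) still reaches into $A_k$ — especially since you place $A_k$ at a corner of $B_k$. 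Such $f(\p/q)$ are not covered by the hyperplane you produced, so your inequality $\min_{A_k}|h| > \sum_j |a_j| c'/N_{k-1}^{1+r_j}$ says nothing about them.

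The paper closes this by first showing (using \equ{c1} and the bound $|y_i - \tilde y_i| \le \rho_i/2$, where $\tilde\vy$ is the center of $B_k$) that every $f(\p/q)$ whose bad box meets $B_k$ must lie in the box $D$ centered at $\tilde\vy$ with sidelengths $2\rho_i$, and only then applies Lemma \ref{lem: simplex} to that enlarged $D$. This is precisely why the threshold \equ{t1} carries the factor $2^n$ in the denominator: you noticed this gives a stronger inequality than your version needs, but the factor is there exactly to absorb the $2^n$ blow-up in volume when passing from $B_k$ to $D$. Replacing your ``$D = B_k$'' with the doubled box, re-running your $h$ computation (which is otherwise correct), and keeping track of the enlarged region in the volume check repairs the argument and brings it in line with the paper's proof.
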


If the above claim is true, then the intersection point $\vy$ of all balls will satisfy \equ{shifted}
for all $\p\in\Z^n$ and $q\in\N$, that is, will belong to ${f}\big(\BA(\vr)\big)$.\end{proof}

\begin{proof}[Proof of Proposition \ref{prop: induction}] We proceed
by induction on $k$. In case $k = 1$, the statement is trivially true since 
the set of $q\in\N$ with $0 < q < 1$ is empty. Now suppose that $A_1,\dots,A_{k-1}$ 
are chosen according to the claim, and Bob picks $B_k\subset A_{k-1}$. Note that
$B_k$ is a box with sidelengths $\rho_i \df e^{-(t_1 + (k-1) (a + b))(1+r_i)}$, $i = 1,\dots,n$. 


By induction, for each $\vy\in B_k\subset A_{k-1}$ \equ{shifted} holds
for all $\p,q$ with $0 < q < e^{(k-2)(a+b)}$.
Thus we need to choose $A_{k}\subset B_{k}$ such that the same is
true for \eq{new}{ e^{(k-2)(a+b)}\le q < e^{(k-1)(a+b)}\,.} 
Let  $\p/q$, 
written in lowest terms, be such that 
\equ{new} holds, and  that \equ{shifted} does not hold
for some $\vy\in B_k$; in other words, for each $i$ one has
$$|y_i- {f}(\p/q)_i|<
\frac{c'}{q^{1+r_i}} \le
\frac{e^{(a+b)(1+r_i)}
c'}{e^{(k-1)(a+b)(1+r_i)}}$$ 
for some $\vy\in B_k$. Denote by $\tilde \vy$ the center of $B_k$, so that 
$$|y_i - \tilde y_i| \le \frac{\rho_i}2 
\,;$$ then for each $i$,
\begin{equation*}
|\tilde y_i- {f}(\p/q)_i| <   \left(e^{(a+b)(1+r_i)}c' +
e^{-t_0(1+r_i)}/2\right)e^{-(k-1)(a+b)(1+r_i)}
\under{
\equ{c1}}< \rho_i
\,. 
 \end{equation*}
 Thus, if we denote by $D$ the box centered at $\tilde \vy$ with
sidelengths 
$2\rho_i$, 
 we can conclude that ${f}(\p/q)\in D$; but also $\p/q\in \mathcal{Q}(e^{(k-1)(a+b)})$, and 
 $$2^n\rho_1\cdots\rho_n  = 
2^n  e^{-\left(t_0 + (k-1)(a+b)\right)(n+1)} \under{
\equ{t1}}< \frac J {{ n! }(e^{(k-1)(a+b)})^{n+1}} \,.$$ 
 Therefore, by Lemma \ref{lem: simplex}, there exists an affine hyperplane $\mathcal{L}$ containing all 
${f}(\p/q)$ as above.

\ignore{

By induction, for each $\x\in B_k$ \equ{shifted} holds
for all $\p,q$ with $0 < q < e^{(k-1)(a+b)}$.
Thus we need to choose $A_{k+1}\subset B_{k}$ such that the same is
true for $ e^{(k-1)(a+b)}\le q < e^{k(a+b)}$. 
Let  $\p/q$, 
written in lowest terms, be such that $ e^{(k-1)(a+b)}\le q < e^{k(a+b)}$ and  that \equ{shifted} does not hold
for some $\x\in B_k$, in other words, for each $i$ one has
$$|x_i- {f}(\p/q)_i|<
\frac{c^{r_i}}{q^{1+r_i}} \le
\frac{e^{(a+b)(1+r_i)}c^{r_i}}{e^{k(a+b)(1+r_i)}}$$ 
for some $\x\in B_k$. Denote by $\tilde \x$ the center of $B_k$, so that 
$$\max_i|x_i - \tilde x_i| \le \frac12 e^{-(t_0 + k (a + b))(1+r_i)}\,;$$ then for each $i$,
\begin{equation*}
\begin{split} |\tilde x_i- p_i/q| &<   \left(e^{(a+b)(1+r_i)}c^{r_i} +
e^{-t_0}/2\right)e^{-k(a+b)(1+r_i)}\\ &\under{\equ{t1}, \equ{c1}}<
\frac3{8(n!J)^{1/n}}{e^{-k(a+b)(1+r_i)}}\,. 
\end{split}
 \end{equation*}
 Thus, if we denote by $D$ the box centered at $\tilde \x$ with
sidelengths $$\rho_i =  \frac3{4(n!J)^{1/n}}e^{-k(a+b)(1+r_i)}\,,$$ 
 we can conclude that $\p/q\in D$; but also $\p/q\in \mathcal{Q}(e^{k(a+b)})$, and 
 $$\rho_1\cdots\rho_n  = \big(\tfrac34\big)^n \frac1{ n!J }
e^{-k(a+b)(n+1)} < 1/{ n!J } \big(e^{k(a+b)}\big)^{n+1} \,.$$ 
 Therefore, by Lemma \ref{lem: simplex}, there exists an affine hyperplane $\mathcal{L}$ containing all 
$\p/q$ as above. 
}

Clearly it will be advantageous for Alice is to stay as far from all those vectors
as possible, i.e., choose $A_{k}\subset B_{k}$ to be a translate of ${\Phi}_{t_1 + (k-1) (a + b) + a}(D_0)$
which maximizes the distance from  $\mathcal{L}$.  A success is guaranteed by the assumption
$a > \max_i\frac{\log 2}{1 + r_i}$, which amounts to saying that for each $i$, the ratio of the 
length of the $i$th
side of the new box to the length of the $i$th
side of $B_k$ is  $e^{-a(1+r_i)} < 1/2$. This implies that for each $\x\in A_{k}$ chosen this way
and any $\x'\in \mathcal{L}$, there exists $i$ such that
$
|x_i - x'_i|$ is not less than the length of the $i$th
side of $B_k$ times $(\frac12  - e^{-a(1+r_i)})$. Therefore, whenever   
\equ{new} holds and $\x\in A_{k}$, for some $i$ one has
\begin{equation*}
\begin{split} 
|x_i - {f}(\p/q)_i| &\ge e^{-(t_0 + (k-1) (a + b))(1+r_i)}\big(\tfrac12  -
e^{-a(1+r_i)}\big)\\ & = e^{-t_0(1+r_i)}\big(\tfrac12  -
e^{-a(1+r_i)}\big)e^{-(a + b)(1+r_i)}e^{-( (k-2) (a + b))(1+r_i)}\\
&\ge  e^{-t_0(1+r_i)}\big(\tfrac12  - e^{-a(1+r_i)}\big)e^{-(a +
b)(1+r_i)} q^{-(1 + r_i)} \under{\equ{c1}}\ge  
c' q^{-(1 + r_i)}\,,
\end{split}
 \end{equation*}
 establishing \equ{shifted}.
\end{proof}

\ignore{Combining this with Propositions \ref{prop: general dilations} and
\ref{prop: general bilipschits} we obtain:

\begin{cor}\name{cor: for intro}
Let $f_1, f_2, \ldots$ be a sequence of invertible maps $\R^n \to
\R^n$ of the form $f_i = A_i \circ B_i \circ C_i$, where:
\begin{itemize}
\item
Each $A_i$ is an $(\FF^{(\vr)})$-dilation. 
\item
Each $B_i$ is $(\FF^{(\vr)}, D_0, s)$-bilipschitz for some fixed $s$.  
\item
Each $C_i$ is an affine map whose linear part commutes with $\Phi_t$
for all $t$;
\end{itemize}
Then $\dim \, \bigcap_i {f}_i\big(\BA(\vr)\big) = n.$ 

\end{cor}
}


\section{Concluding remarks}\name{sec: next}

\subsection{Dimension of winning sets for Schmidt's  game}\name{sec: fulldim}
The formalism developed in \S\S\ref{sec: games}--\ref{sec: contr}
appears to be quite general, and we expect it to be useful in a
wide variety of situations. In particular, new information can be
extracted even for the original Schmidt's game.
Namely, here we state a condition on a metric space 
sufficient to conclude that any winning set of  Schmidt's game \equ{rn}
has big enough dimension.
This will be another application of Theorem \ref{thm: WD}.
Recall that a locally finite  Borel measure $\mu$ on a
metric space $X$ is called {\sl Federer\/}, or doubling, if there is $K>0$ and $\rho > 0$ such that for all
$x \in \supp\,\mu$ and $0 < r < \rho$, 
\eq{defn Federer}{
\mu\big({B}(x,3r) \big) \leq K \mu\big({B}(x,r)
\big)\,.
}
\begin{prop}
\name{cor: Federer}
Let $E$ be a complete metric space which is the support of a Federer
\commargin{added $\alpha,\beta$}
measure  $\mu$. Then there exist  $c_1,c_2 > 0$, depending only on
$K$ as in \equ{defn Federer},
such that whenever  $0 < \alpha  < 1$,  $0 < \beta < 1/2$, and $S$ is an $(\alpha,
\beta)$-winning set for  Schmidt's game 
as in
\equ{rn}  played on $E$ and  $ \varnothing \ne U \subset E$ is open,
one has
\eq{federer estimate}{\dim (S \cap
U) \geq \underline{d}_{\mu}(U)  -
 \frac{c_1|\log \alpha|  + c_2}{|\log \alpha|+ |\log \beta|}  \,.} 
In particular,   $\dim (S\cap U) \geq
\underline{d}_{\mu}(U)$ if $S$ is winning. 
\end{prop}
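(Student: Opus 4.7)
The plan is to apply Theorem \ref{thm: WD} to the Schmidt game setup \equ{rn}, with $\psi(x,t) = B(x, e^{-t})$, $\A = |\log\alpha|$, $\B = |\log\beta|$, $\sigma = 1$, $C = 2$, $a_* = 0$ and $t_* = -\infty$, using the given Federer measure $\mu$ as the reference measure. Conditions (MSG0--2) are trivial for this setup, and ($\mu$1) follows from $\supp \mu = E$, so the entire content of the proposition reduces to verifying ($\mu$2) with a constant $c$ whose logarithm is linear in $|\log\alpha|$; plugging this into \equ{wd} will then yield \equ{federer estimate} after rearrangement.

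The first step is to construct essentially disjoint Bob moves inside an arbitrary $\omega = (x,t)$ with $r \df e^{-t}$ less than a suitably shrunk Federer threshold. I would take a maximal $2\beta r$-separated set $\{x_i\}$ in $B(x, (1-\beta)r)$; the balls $\psi(\theta_i) = B(x_i, \beta r)$ are then essentially disjoint and lie in $I_\B(\omega)$, while by maximality their doublings $B(x_i, 2\beta r)$ cover $B(x, (1-\beta)r)$. Applying \equ{defn Federer} once yields $\mu(B(x_i, 2\beta r)) \leq K\mu(B(x_i, \beta r))$, and applying it a second time — using $\beta < 1/2$ to get $3(1-\beta)r \geq r$ and hence $B(x, r) \subset B(x, 3(1-\beta)r)$ — yields $\mu(B(x,r)) \leq K \mu(B(x,(1-\beta)r))$. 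Summing over $i$ and combining gives $\sum_i \mu(\psi(\theta_i)) \geq K^{-2}\,\mu(\psi(\omega))$.

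Next, for an arbitrary Alice response $\theta_i' \in I_\A(\theta_i)$, whose image $\psi(\theta_i') = B(x_i', \alpha\beta r)$ is Schmidt-contained in $B(x_i, \beta r)$, I would note that the centers satisfy $d(x_i,x_i') \leq (1-\alpha)\beta r$, so $B(x_i, \beta r) \subset B(x_i', 2\beta r)$, and iterate \equ{defn Federer} exactly $j = \lceil \log_3(2/\alpha)\rceil$ times to deduce $\mu(B(x_i, \beta r)) \leq K^j \mu(B(x_i', \alpha\beta r))$. Summing over $i$ establishes ($\mu$2) with $c = K^{-j-2}$, hence
$$\log c = -(j+2)\log K \geq -\bigl(c_1|\log\alpha| + c_2\bigr)$$
for constants $c_1,c_2>0$ depending only on $K$ (one may take $c_1 = \log K/\log 3$). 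Substituting this and $\sigma=1$ into \equ{wd} delivers \equ{federer estimate}, and letting $\beta \to 0^+$ with $\alpha$ fixed (or just noting that the subtracted quantity stays bounded while the numerator in the final term is fixed) gives the concluding ``in particular'' statement.

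The main obstacle — essentially the only non-routine point — is to keep the dependence of $\log c$ on $\alpha$ linear in $|\log\alpha|$; this is exactly why the doubling property is iterated in multiplicative steps of $3$, and it is also why one must exploit the hypothesis $\beta < 1/2$ to absorb the passage from $B(x,r)$ down to the slightly smaller ball $B(x,(1-\beta)r)$ into a single application of Federer, rather than accruing a $\beta$-dependent cost that would pollute the numerator in \equ{federer estimate}.
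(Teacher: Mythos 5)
Your proposal is correct and follows essentially the same route as the paper: both proofs reduce the proposition to verifying ($\mu$2) for the Schmidt setup \equ{rn}, both build the Bob-moves $\theta_i$ from a maximal disjoint family of $\beta r$-balls Schmidt-contained in $\omega$ (the paper covers $B(x,(1-\beta)r)$ by the $3\beta r$-dilates, you by the $2\beta r$-dilates — an immaterial difference), and both control Alice's response by iterating the doubling inequality roughly $\log_3(1/\alpha)$ times so that $\log c$ is linear in $|\log\alpha|$. The paper packages the Alice step as a standalone claim (\equ{strong federer}) with the choice $\alpha/6 < 3^{-m}\le\alpha/2$ whereas you work with $j=\lceil\log_3(2/\alpha)\rceil$ directly, and the paper uses $\beta<1/2$ via $B(x,(1-\beta)r)\supset B(x,r/2)$ and one extra Federer step rather than via $3(1-\beta)\ge 1$; these are equivalent bookkeeping choices leading to the same qualitative constants $c_1,c_2$.
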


Clearly Theorem \ref{thm: full dim} is a special case of
the above result. In addition, Proposition \ref{cor: Federer} 
generalizes a recent result of L.\ Fishman \cite[Thm.\ 3.1 and Cor.\ 5.3]{Fishman} that for a
measure $\mu$ satisfying a power law (see \equ{pl}; this condition obviously implies Federer)
a winning set for Schmidt's
original game \equ{rn} played on $E = \supp \, \mu$ has full
\hd. 
See \cite[Example 7.5]{bad} for an example of a
\commargin{closed is automatic} 
subset of $\R$ (a similar construction is possible in $\R^n$ for any $n$) \commarginnew{changed}
supporting a measure of full \hd\ which is Federer but does not satisfy a power
law. 

\ignore{It follows that the metric space constructed  in Proposition \ref{prop: lower dim example}
cannot support a Federer measure of positive dimension.}

\begin{proof}[Proof of Proposition  \ref{cor: Federer}] 
We need to check the assumptions of Theorem  \ref{thm: WD}.  Conditions (MSG0--2) are immediate, and  ($\mu$1) 
holds since $\supp\,\mu = E$.  Thus it only suffices
to verify ($\mu$2). 
It will be convenient to switch back to Schmidt's multiplicative notation of \S\ref{special}.
Fix $0 < \alpha  
< 1$; we claim that there exists $c' > 0$
such that for any $x,x'\in E$ and $0 < r < \rho$ one has
\eq{strong federer}{{B}(x', \alpha r) \subset {B}(x, r) \implies \mu\big({B}(x' ,\alpha r)  \big) \geq c'
\mu\big({B}(x ,r)  \big)\,.}
Indeed,  choose $m\in\N$ and $c' > 0$ such
that \eq{def const}{\alpha/6 < 3^{-m} \leq \alpha/2\text{ and }c' = K^{-m}\,.}
Iterating 
\equ{defn Federer} $m$ times, we find  $\mu\big({B}(x' ,\alpha
r)\big) \geq c'\mu \big({B}(x',2r) \big)$ for any $x' \in E$ and
$r>0$. Since for any $x' \in
{B}(x, r)$ the latter ball 
 is contained in $ {B}(x', 2r)$, \equ{strong federer} follows. 

Now take $0 < \beta < 1/2$ and $\omega = (x,r)\in E\times \R_+$ with $r < \rho$, and 
let
$x_i,\  i=1, \ldots, N$, be a maximal collection of points such that $\theta_i \df (x_i,\beta r)\le_s \omega$ and
balls $ B(\theta_i)$ are pairwise disjoint.
By maximality, 
$$B\big(x,(1-\beta)r\big) \subset \bigcup_{i=1}^N {B}(x_i,
3\beta r)\,.$$
 (Indeed, otherwise there exists $y\in  B\big(x,(1-\beta)r\big)$ with $d(y,x_i) > 3\beta r$, 
which implies that $(y,\beta r)\le_s \omega$ and $ B(y,\beta r)$ is disjoint from $ B(\theta_i)$
for each $i$.)
In view of \equ{strong federer}, for any choices of 
$\theta_i' \df (x_i', \alpha \beta r) \le_s \theta_i $ one has
$\mu\big( B(\theta_i')\big) \geq 
c'\mu\big({B}(\theta_i)\big)$. This implies 
\begin{equation*}
\begin{aligned}
\mu\big (\bigcup  B(\theta_i') \big)  & = \sum \mu\big ( B(\theta_i) \big)  
 \geq c' \sum \mu
\big({B}(\theta_i) \big) \\
& \geq   \frac{c'}{K} \sum \mu
\big({B}(x_i, 3\beta r) \big) \geq  
\frac{c'}{K}\mu\big( B(x,(1-\beta)r\big)\\ &\geq  
\frac{c'}{K}\mu\big( B(x,r/2)  \big)\geq  
\frac{c'}{K^2}\mu\big( B(\omega) \big) \,.
\end{aligned}
\end{equation*}
Hence ($\mu$2) holds with $c = c'/K^2$,  and \equ{federer estimate}, with explicit
\commargin{We can compute $c_1 = \log K / \log 3$, $c_2 = \log K (3-\log 2/\log 3) $ but this is kind of boring..}
$c_1$ and $c_2$,  follows from \equ{wd}
and \equ{def const}.
\end{proof}

\ignore{We remark that, in view of \equ{wd} and an estimate
for $c$ obtained above,  whenever $S$ is  $(\alpha,\beta)$-winning  one has
$${\dim (S \cap
U) \geq \underline{d}_{\mu}(U)  -
 \frac{\log K \left(\frac{|\log \alpha/2|}{\log 3} + 3\right)}{|\log \alpha|+ |\log \beta|}  \,.} $$
\medskip}

\ignore{
Then one has  $\sigma = 1$, $\delta = n =
\underline{d}_\mu(x)$  
for all $x$, and $N(s) \ge c_n e^{ns}$ for some
constant $c_n$ dependent only on $n$. 
}

\ignore{We will say that $\mu$ is {\sl strong Federer with respect to $\cd$}
if for any $p>0$ there is $c=c(p)>0$ such that for all 
$t \geq t_*$, all $D
\in \cd_t$ and all $D' \in \cd_{t+p}$ with $D' \subset D$, one has 
$\mu(D') \geq c\mu(D).$ Note that this assumption implies that for any
$D \in \cd_t$ and any $p>0$, a maximal collection of disjoint elements
of $\cd_{t+p}$ contained in $D$ is finite. 

Specializing to the setup \equ{rn} of Schmidt's original game and
taking $p = \log 3$, this 
implies that there is $c>0$ such that if $B = {B}(x,3r)$ and
$B'={B}(z, r) 
\subset B$ then $\mu(B') \geq c\mu(B).$ This implies
the Federer condition and is
implied by the power law  
condition  (see \cite{bad}
for a discussion of conditions on measures). For
an example of a measure $\mu$ which is strongly Federer (with respect
to $\cd$ as in \equ{rn}) but does not satisfy a power law, see
\cite[Example 7.5]{bad}. }

The next proposition shows that, as was mentioned in \S\ref{dimest},
without additional assumptions on a metric space 
the conclusion
of Theorem \ref{thm: full dim} could fail:

\begin{prop}\name{prop: lower dim example}
There exists a complete metric space $E$ of positive \hd\ containing a countable
(hence zero-dimensional) winning set $S$ for
the game \equ{rn}.

\end{prop}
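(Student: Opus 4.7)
The plan is to construct a complete metric space $(E,d)$ of positive Hausdorff dimension containing a countable set $S$ on which Alice has a trivial winning strategy. The key idea is to rig the metric so that around every $s_n \in S$ there is an ``isolation scale'' $\epsilon_n > 0$ such that the ball $B(s_n, r)$ in $(E,d)$ collapses to the singleton $\{s_n\}$ whenever $r < \epsilon_n$. Once the scale of Schmidt's game drops below a suitable threshold, Alice can park on some $s_n$ by centering her ball there, trapping Bob at $s_n$ forever.

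For the construction, I would start from the standard middle-thirds Cantor set $C \subset [0,1]$, which has positive \hd\ $\log 2 / \log 3$. Fix a countable dense subset $D \subset C$, and for each $p \in D$ and $j \in \N$ choose a point $s_{p,j} \in [0,1] \setminus C$ with $\epsilon_{p,j} \df |s_{p,j} - p| < 2^{-j}$. Enumerate $S \df \{s_{p,j}\}$ as $\{s_n\}_{n \ge 1}$ with corresponding scales $\{\epsilon_n\}$, set $E \df C \cup S$, and define
\[
d(x, y) = \begin{cases} |x - y| & x,y \in C, \\ \max(|x - y|, \epsilon_n) & x = s_n \in S,\ y \in C, \\ \max(|x - y|, \epsilon_n, \epsilon_m) & x = s_n,\ y = s_m,\ n \ne m, \end{cases}
\]
with $d(x,x) = 0$. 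A routine case check yields the triangle inequality. Since $d$ agrees with the Euclidean metric on $C$, $\dim(E,d) \ge \dim C > 0$, and by construction $B(s_n, r) = \{s_n\}$ whenever $r < \epsilon_n$. Completeness follows from the fact that any Cauchy sequence visiting infinitely many distinct $s_n$'s forces $\epsilon_n \to 0$ (hence $j \to \infty$), so the corresponding anchors $p \in D$ converge in Euclidean to the same limit as the $s_{p,j}$'s, and this limit lies in $\overline D \subset C \subset E$.

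For Alice's strategy, fix $\alpha = 1/2$, let Bob name any $\beta > 0$ and opening $(x_1, r_1)$, and invoke Lemma \ref{lem: dummy} so that Alice may first play dummy moves (landing inside $C$, which is easy since $d$ is Euclidean there) until the radius $r_k$ is so small that the density of $D$ in $C$ together with $\epsilon_{p,j} \to 0$ yields some $s_n$ satisfying $d(x_k, s_n) \le (1 - \alpha) r_k$ and
\[
\alpha(1-\beta) r_k < \epsilon_n < \alpha r_k.
\]
Alice then plays $\omega_k' = (s_n, \alpha r_k)$: the upper bound collapses her ball to $\{s_n\}$, and the lower bound forces any admissible center of Bob's next move to satisfy $d(x_{k+1}, s_n) \le \alpha(1-\beta) r_k < \epsilon_n$, which by the definition of $d$ compels $x_{k+1} = s_n$. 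Inductively, every subsequent center is $s_n$, so $\bigcap_k B_k = \{s_n\} \subset S$, proving $S$ is $\alpha$-winning.

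The delicate point will be simultaneously arranging scale density (landmarks of arbitrarily small $\epsilon_n$ available near every $x \in E$) and completeness (no rogue Euclidean accumulation of $\{s_n\}$ outside $E$). Anchoring each $s_{p,j}$ to a fixed $p \in D$ with $s_{p,j} \to p$ at a controlled rate resolves this tension, as the only Euclidean accumulation points of $S$ are then in $\overline D \subset C$.
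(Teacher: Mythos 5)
Your construction is genuinely different from the one in the paper. The paper takes $E$ to be a subshift of $\{0,1,2\}^{\N}$ with the rule that $0$ must be followed by $0$, so the countable set of eventually-zero sequences is automatically both dense and consists of isolated points at canonical scales $3^{-\ell}$; you instead take a Cantor set $C$ plus a sprinkling $S=\{s_{p,j}\}$ of points off $C$ and rig the metric so each $s_n$ is isolated at scale $\epsilon_n$. The spirit (a dense countable set of isolated ``traps'' inside a positive-dimensional space) is the same, and the metric, the dimension count and the completeness argument in your write-up are fine. But there are two problems with the winning-strategy step.

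First, a small but confusing slip: you say ``the upper bound collapses her ball to $\{s_n\}$.'' This is backwards --- $B(s_n,r)=\{s_n\}$ precisely when $r<\epsilon_n$, so it is a \emph{lower} bound $\epsilon_n>\alpha r_k$ that would collapse the ball, not $\epsilon_n<\alpha r_k$. (The upper bound is actually what makes $(s_n,\alpha r_k)\le_s(x_k,r_k)$ a legal move, via $d(x_k,s_n)\ge\epsilon_n$.) Second, and more seriously: you have fixed $\alpha=1/2$, and then the admissible window for $\epsilon_n$ is $\bigl(\tfrac{1-\beta}{2}r_k,\ \tfrac12 r_k\bigr]$, whose multiplicative width is $1/(1-\beta)$. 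For $\beta$ small this is arbitrarily close to $1$, while the radii you get to choose from are only $r_k=(\alpha\beta)^{k-1}r_1$, so the union over $k$ of these windows is a sparse set of possible $\epsilon$-scales. Your construction places $\epsilon_{p,j}$ anywhere in $(0,2^{-j})$ with no lower bound and no control on spacing, so there is no guarantee that any $s_n$ with anchor $p$ near $x_k$ has $\epsilon_n$ in the required window; ``the density of $D$ together with $\epsilon_{p,j}\to 0$'' does not by itself produce a scale-matched $s_n$. To repair this you would need both a small $\alpha$ (so the window $(\alpha(1-\beta)r_k,(1-\alpha)r_k]$ has multiplicative width $\geq(1-\alpha)/\alpha$, independent of $\beta$) and a controlled decay such as $\epsilon_{p,j}\in(2^{-j-1},2^{-j})$ so that consecutive isolation scales near every anchor differ by a bounded factor. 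The paper's subshift example sidesteps this tuning entirely: with $\alpha=1/27$, truncating $x$ at the position dictated by $r$ and padding with zeros always lands on an $s\in S$ whose isolation scale lies exactly where it is needed, uniformly in $\beta$.
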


\begin{proof}
Let $X = \{0,1,2\}^{\N}$, equipped with the metric 
$$d\big((x_n), (y_n) \big) = 3^{-k}, \ \ \mathrm{where \ } k =
\min\{j: x_j \neq y_j\}.$$
Let $E \subset X$ be the subset of sequences in which the digit
0 can only be followed by 0; i.e. 
$$x_{\ell} =0, \ k \geq \ell \implies x_k =0. $$
Then $E$ is a closed subset of $X$ so is a complete metric space when
equipped with the restriction of $d$. 

Let $S$ be the set of sequences in $E$ for which
the digit 0 appears. Then $S$ is a countable dense subset in $E$ but no point
in $S$ is an accumulation point of $E \sm S$. In particular $\dim (S) =0$,
and it is easily checked that $\dim (E) = \log 2 / \log 3 >0.$

Let $\alpha = 1/27,$ and let $\beta$ be arbitrary. 
Suppose that  Bob chooses $\omega = (x, r)$,
where $x = (x_n)$.
Letting Alice
play arbitrarily we can assume that $r<1$. Let $\ell \in \N$ be
chosen so that $3^{-(\ell+1)} < r \leq 3^{-\ell}.$ 
Note that  $ B(\omega)$ contains all sequences $(y_n)$
with $y_i = x_i$ for all $i \leq \ell$, and in particular the sequence
$z = (x_1, \ldots, x_{\ell}, x_{\ell+1}, 0, 0, \ldots) \in S.$ Now Alice chooses 
$\omega' = (z, \alpha r)$; it is easy to see that $\omega'\le_s \omega$
and that $ B(\omega')=\{z\}$ (a singleton), since any
other sequence in this ball must begin with $(x_1, \ldots, x_{\ell},
x_{\ell+1}, 
0)$. Thus the outcome of the game is $z$ and Alice is the winner.
\end{proof}

It is not hard to see that such an example can be realized as a compact subset of 
$\R$ with the induced metric (e.g.\ by identifying sequences $(x_n)$ with real numbers
$0.x_1x_2\dots$ expanded in base $3$).


It is also worth remarking that another special case of our general framework is
an $(\alpha,\beta)$-game played on arbitrary metric space $E$ but with Schmidt's
containment relation  \equ{cont} replaced by 
  \eq{weakcont}									
{(x',r') \le (x,r)\quad \iff \quad  B(x',r') \subset   B(x,r)\,,	
}
similarly to the way it was done in \equ{def order}. The two conditions are equivalent when $E$ is a Euclidean space.
However in general, e.g.\ when $E$ is a proper closed  subset of $\R$ or $\R^n$
such as those considered in \cite{Fishman},  \equ{weakcont}	
is weaker, and the classes of winning sets for the two games could differ.
Still, by modifying the argument of this subsection one can show that
the conclusions of both propositions hold when the game is played according to 
the weaker containment relationship.

\subsection{Sets of the form \equ{tripleint} and their generalizations}\name{sec: int} 
Take $E = \R^n$ and let $\mathcal{F}$ be a one-parameter semigroup of its linear
contracting transformations. Suppose that $E = E_1\oplus E_2$ where both $E_1$
and $E_2$ are invariant under $\mathcal{F}$,  denote by $\mathcal{F}_1$
the restriction of $\mathcal{F}$ to $E_1$, and suppose that 
$S_1\subset E_1$ is a 
winning subset  of the MSG induced by $\mathcal{F}_1$. Then it immediately follows
from Lemma \ref{lem: product} that $S_1\times E_2$ is a 
winning subset  of the MSG induced by $\mathcal{F}$. Applying it to $\mathcal{F} = \mathcal{F}^{(\vr)}$
as in \equ{defn ar} we obtain

\begin{prop}\name{prop: intersections} For $\vr$ as in \equ{defn r} and $1\le k \le n$, define
$\vs\in\R^k$ 
by 
\eq{defn s}{
s_i = \frac{1 + (k+ 1) r_i - \sum_{l=1}^k r_l}{k +  \sum_{l=1}^k r_l}\,,\quad i = 1,\dots, k\,.
}
Then $
\BA(\vs) \times \R^{n-k}$
is a  winning set for the MSG induced by $\mathcal{F}^{(\vr)}$, and therefore so is its intersection with $\BA(\vr)$.
\end{prop}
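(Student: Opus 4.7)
The plan is to apply Theorem \ref{thm: main} on the $k$-dimensional factor $E_1 \df \spa(\E_1,\dots,\E_k)$, combine it with the trivial game on $E_2 \df \spa(\E_{k+1},\dots,\E_n)$ via Lemma \ref{lem: product}, and then intersect with $\BA(\vr)$ using Theorem \ref{thm: countable general}. The decomposition $\R^n = E_1\oplus E_2$ is invariant under each $\Phi^{(\vr)}_t$, so the setup discussed in the paragraph preceding the statement applies. The crux is the algebraic identity
\begin{equation*}
1 + s_i \;=\; \frac{k + 1 + (k+1)r_i}{k + \sum_{l=1}^k r_l} \;=\; \lambda\,(1+r_i),\qquad \lambda \df \frac{k+1}{k + \sum_{l=1}^k r_l} > 0,
\end{equation*}
valid for every $i = 1,\dots,k$. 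Summing over $i$ gives $\sum_{i=1}^k s_i = 1$, and positivity of each $s_i$ follows from $\sum_{l=1}^k r_l \le 1$ together with $r_i > 0$, so that $\vs$ indeed satisfies \equ{defn r}. The identity is equivalent to $\Phi^{(\vs)}_t = \Phi^{(\vr)}_{\lambda t}|_{E_1}$, so the one-parameter semigroups $\mathcal{F}^{(\vs)}$ and $\mathcal{F}^{(\vr)}|_{E_1}$ coincide as sets of endomorphisms of $E_1$, differing only by a positive linear reparametrization of time; consequently the two induced MSGs have identical classes of winning sets.

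By Theorem \ref{thm: main}, $\BA(\vs)$ is winning for the $\mathcal{F}^{(\vs)}$-MSG on $E_1$, and hence for the $\mathcal{F}^{(\vr)}|_{E_1}$-MSG. Since $\R^{n-k}$ is trivially a winning set for the $\mathcal{F}^{(\vr)}|_{E_2}$-MSG (Alice wins by any legal play), Lemma \ref{lem: product} yields that $\BA(\vs)\times\R^{n-k}$ is winning for the $\mathcal{F}^{(\vr)}$-MSG on $\R^n$; the same conclusion for $\BA(\vr)$ is the content of Theorem \ref{thm: main} itself.

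For the intersection, the $a$-winning thresholds must be matched: by Theorem \ref{thm: precise}, $\BA(\vr)$ is $a$-winning for every $a > \max_{1\le i\le n}\frac{\log 2}{1+r_i}$, while the analogous threshold for $\BA(\vs)\times\R^{n-k}$, obtained via the time rescaling and the identity $\lambda^{-1}(1+s_i) = 1+r_i$, becomes $\max_{1\le i\le k}\frac{\log 2}{1+r_i}$, which is no larger. Any $a$ exceeding both thresholds makes the two sets simultaneously $a$-winning, so Theorem \ref{thm: countable general} yields that their intersection is $a$-winning as well. No step presents a genuine obstacle; the entire content lies in the identity $1+s_i = \lambda(1+r_i)$, after which the result assembles directly from tools already in hand.
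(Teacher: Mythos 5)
Your proof is correct and follows the same route as the paper: observe that $(1+s_1,\dots,1+s_k)$ is proportional to $(1+r_1,\dots,1+r_k)$, so $\mathcal F^{(\vs)}$ is a time-reparametrization of $\mathcal F^{(\vr)}|_{E_1}$, and then invoke Lemma \ref{lem: product}. The only thing you add is the explicit check (via Theorem \ref{thm: precise}) that the $a$-thresholds for $\BA(\vs)\times\R^{n-k}$ and $\BA(\vr)$ can be matched before applying Theorem \ref{thm: countable general}; the paper leaves this implicit, but it is a genuine point and your treatment of it is correct.
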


\begin{proof}  Note that $\vs$ 
is defined
so that $\sum_i s_i$ 
is equal to $ 1$, and
the vector $(1 + s_1,\dots,1+s_k)$ 
is  proportional to $(1 + r_1,\dots,1+r_k)$. 
Therefore the semigroup $\mathcal{F}^{(\vs)}$ 
is simply a reparameterization
of the restriction of $\mathcal{F}^{(\vr)}$ to 
$\R^k$, and the claim follows
from  Lemma~\ref{lem: product}. \end{proof}

It is clear that the winning property of the set \equ{tripleint}
follows from a special case of the above proposition. 
The same  scheme of proof,  which seems to be much less involved than
that of \cite{PV-bad},  is applicable to multiple intersections of
sets of weighted \ba\ vectors. E.g.\ given $\vr\in\R^3$ with $r_1 +
r_2 + r_3 = 1$, 
equation \equ{defn s} 
can be used to define $s_{ij}$ for $i,j = 1,\dots,3$, $i\ne j$,
such that \eq{sum 1}{\sum_{i} s_{ij} = 1\text{ for  }j = 1,2,3\,,} and that
\eq{7int}{
\begin{aligned}
\BA(\vr) &\cap \BA(s_{13},s_{23},0)  \cap \BA(s_{12},0,s_{32}) \cap
\BA(0, s_{21},s_{31}) \\ &\cap \BA(1,0,0) 
\cap \BA(0,1,0)\cap \BA(0,0,1)
\end{aligned}}
is a winning set  for the MSG induced by $\mathcal{F}^{(\vr)}$,  and therefore is thick.
Take for example $\vr = (\frac12, \frac13, \frac16)$; our conclusion is that 
$$\begin{aligned}
\BA(\vr) &\cap \BA(\tfrac{10}{17}, \tfrac7{17},0)  \cap \BA(\tfrac9{16},0,\tfrac7{16}) \cap \BA(0, \tfrac2{3},\tfrac1{3}) \\ &\cap \BA(1,0,0)
\cap \BA(0,1,0)\cap \BA(0,0,1)
\end{aligned}$$
is thick. 
We remark that the assertion made in \cite[p.\ 32]{PV-bad}, namely that given $\vr$ as above, 
the set \equ{7int} is thick
for an {\it arbitrary\/} choice of $s_{ij}$ satisfying \equ{sum 1}, does not seem to follow from 
either our  methods of proof or those of \cite{PV-bad}. \commarginnew{changed} 

\subsection{Games and dynamics}\name{sec: dyn} 
The appearance of the semigroup $\mathcal{F}^{(\vr)}$ in our analysis of the set $\BA(\vr)$
can be naturally explained from the point of view of homogeneous dynamics.
Let $G=\SL_{n+1}(\R)$,  $\Gamma = \SL_{n+1}(\Z)$. The \hs\ 
$\ggm$ can be identified with the space of unimodular lattices in $\R^{n+1}$. To a vector $\x\in\R^n$
one associates a unipotent element $\tau(\x) = \left(
\begin{array}{ccccc}
I_n & \x \\ 0 & 1
\end{array}
\right)$ of $G$, which gives rise to a lattice $$\tau(\x) \Z^{n+1}  = \left\{\left(
\begin{array} {c} q \x - \p \\ q \end{array}
\right): q\in \Z,\ \p\in\Z^n\right\}\in\ggm\,.$$ Then, given $\vr$ as in 
\equ{defn r},  consider  the
one-parameter subgroup $ \{g^{(\vr)}_t\}$  of
$G$, where
\eq{def gtr}{g^{(\vr)}_t \df \diag(e^{r_1t}, \ldots,
e^{r_n t}, e^{-t})\,.
}
It was observed by Dani  \cite{Dani-div} for $\vr = \vn$ and by the first named author \cite{K-matrices}  for arbitrary $\vr$
that $\x\in\BA(\vr)$ if and only if the trajectory
$$\big\{g^{(\vr)}_t {\tau}(\x)\Z^{n+1} : t \ge 0\big\}$$ is bounded in $\ggm$.
Note that the $g^{(\vr)}_t$-action on $\ggm$ is partially hyperbolic, and  
it is straightforward to verify that the $\tau(\R^n)$-orbit foliation
is $g^{(\vr)}_t$-invariant, and that
 the action on the  foliation induced by the $g^{(\vr)}_t$-action on $\ggm$
is realized  by $\mathcal{F}^{(\vr)}$. Namely, one has
$$g^{(\vr)}_t  {\tau}(\x)y = g^{(\vr)}_t\tau\big({\Phi}^{(\vr)}_{-t}(\x)\big)y$$ for any $y \in\ggm$.


Dani used  Schmidt's result on the winning property of the set $\BA$ 
and the aforementioned 
correspondence 
 to prove that the set of points of $\ggm$ with bounded $g^{(\vn)}_t$-trajectories,
 where $\vn$ is as in \equ{def n}, is thick.
Later \cite{KM} this was established for arbitrary 
flows $(\ggm,g_t)$ 
`with no non-trivial quasiunipotent factors'. \commargin{changed this a little bit}
In fact the following was proved: denote by $H^+$
the $g_1$-expanding horospherical subgroup of $G$, that is,
$$H^+ = \{h\in G : g_{-t}hg_t \to e\text{ as }t\to\infty\}\,;$$ then for any $y\in\ggm$ 
the set
\eq{ehs}{
\big\{h\in H^+ : \{g_thy : t \ge 0\} \text{ is bounded in } \ggm\big\}
}
is thick.
The main result  of the present paper strengthens the above conclusion
in the case $G=\SL_{n+1}(\R)$,  $\Gamma = \SL_{n+1}(\Z)$ and $g_t = g^{(\vr)}_t $
as in \equ{def gtr}. Namely,
consider  the subgroup $H = \tau(\R^n)$
of $H^+$ (the latter for generic $\vr$ is isomorphic to the group of all upper-triangular unipotent matrices).
Then  for any $y\in\ggm$, the intersection of the set \equ{ehs} with an arbitrary coset $H h'$
of $H$ in $H^+$ is winning  for a certain MSG determined only by $\vr$ (hence is thick).
In particular, in view of Theorem \ref{thm: countable} and Proposition \ref{prop: affine commuting},
 this implies that for an arbitrary countable sequence of
points $y_k\in \ggm$, the intersection of all sets $
\big\{g\in G : \{g_t gy_k \} \text{ is bounded in } \ggm\big\}$
is thick.

We note that the proof in \cite{KM} is based on mixing of the $g_t$-action on $\ggm$, 
while to establish the aforementioned stronger winning property 
mixing does not seem to be enough,
and additional arithmetic considerations are necessary. 
In a recent work \cite{new}, for any 
flow $(G/\Gamma,g_t)$ with no nontrivial quasiunipotent factors we describe
a 
class of subgroups $H$ of the $g_1$-expanding horospherical subgroup of $G$ 
which are normalized by $g_t$ and have the property that for any $y\in\ggm$,
the set
\eq{bounded}{
\big\{h\in H : \{g_thy : t \ge 0\} \text{ is bounded in } \ggm\big\}
} 
is
winning for the MSG induced by  contractions $h\mapsto g_{-t}hg_t$. The argument is based on reduction theory for arithmetic
groups, that is, on an analysis of the structure of cusps of arithmetic \hs s.

Another result  obtained in \cite{new} is
that for $G$, $\Gamma$, $\{g_t\}$, $H$, $y$ as above and any $z\in\ggm$,  sets
\eq{nondense}{
\left\{h\in H : z\notin \overline{\{g_thy : t \ge 0\}} \right\}
} 
are also winning for the same MSG. Again this is a strengthening of results on the thickness
of
those sets existing in the literature, see \cite{K}. Combining the two statements above 
and using the intersection property of winning sets \equ{bounded} and \equ{nondense}, one
finds a way to construct orbits which are both bounded and stay away from a given 
countable subset of $\ggm$, which settles a conjecture made by Margulis in \cite{Ma}.

\subsection{Systems of linear forms}\name{sec: sys} 
A special case of the general theorem mentioned in the previous subsection 
is a generalization of the main
result of the present paper to the case of systems of linear forms. Namely,  let $m,n$ be positive integers,
denote by $\mr$ 
the space of $m\times n$ matrices with real entries (system
of $m$ linear forms in $n$ variables), and say that \amr\ is  
 {\sl $(\vr,\vs)$-\ba\/} if
$$ \inf_{\vp\in\Z^m,\,\vq\in\Z^n\nz} \max_i |Y_i\vq - p_i|^{1/r_i} \cdot
\max_j|q_j|^{1/s_j}    > 0\,,
$$
where $Y_i$, $i = 1,\dots,m$ are rows of $Y$ and $\vr\in\R^m$ and $\vs\in\R^n$ are such that
\eq{def rs}{r_i,s_j > 0\quad\text{and}\quad\sum_{i=1}^m r_i = 1 =
 \sum_{j=1}^n s_j\,.}
 (Here the components of vectors $\vr,\vs$ can  be thought of as weights assigned to
linear forms $Y_i$ and integers $q_j$ respectively.)
The correspondence described in the previous subsection extends to the matrix set-up, 
with  $G = \SL_{m+n}(\R)$ and $\Gamma = \SL_{m+n}(\Z)$
and 
\begin{equation*}
\label{eq: new defn g_t}
g^{(\vr,\vs)}_t =  \diag(e^{r_1t}, \ldots,
e^{r_{m} t}, e^{-s_1t}, \ldots,
e^{-s_{n}t} )
\end{equation*}
acting on $\ggm$.
This way  one can show  that the set $\BA(\vr,\vs)\subset \mr$ of $(\vr,\vs)$-\ba\ systems
is winning for the MSG induced by the semigroup of contractions
${\Phi}_t : (y_{ij}) \mapsto ( e^{- (r_i + s_j)t }y_{ij})$ of $\mr$ (a special case where all weights are
equal is a theorem of Schmidt \cite{Schmidt}). This generalizes  Theorem \ref{thm: main}
and strengthens \cite[Corollary 4.5]{di} where it was shown that $
\BA(\vr,\vs)
$ is thick
for any choice of $\vr,\vs$ as in \equ{def rs}.

\subsection{Playing games on other metric spaces}\name{sec: other} The paper  \cite{KTV}, where it
was first proved that the set of weighted badly approximable vectors in $\R^n$
has full \hd, contains a discussion of analogues of 
the sets  $\BA(\vr)$ over local fields other than $\R$. In \cite[\S\S5.3--5.5]{KTV} it is explained 
how to apply the methods of  \cite[\S\S2--4]{KTV} to studying weighted badly approximable vectors 
in vector spaces over $\C$ as well as over non-Archimedean fields\footnote{See also \cite{kr}
where  Schmidt's result on the winning property of the set of 
\ba\ systems of linear forms is extended to the field of formal power series.}. 
Similarly one can apply the methods of the present paper to replace Theorems 17--19
of  \cite{KTV} by stronger statements that the corresponding sets are winning sets of certain MSGs.
For that one needs to generalize the set-up of \S \ref{sec: contr} and consider
modified Schmidt games induced by contracting automorphisms of
arbitrary locally compact topological groups (not necessarily real Lie groups). 

Another theme of the papers  \cite{KTV} and  \cite{bad} is intersecting the set of badly
approximable vectors with some nice fractals in $\R^n$. For example  \cite[Theorem 11]{KTV}, 
slightly generalized in \cite[Theorem 8.4]{bad}, states the following: let  $\mu = \mu_1\times\dots\times \mu_d$, where each
$\mu_i$ is a
measure on $\R$ satisfying a power law (called `condition (A)' in \cite{KTV}); then $\dim\big(\BA(\vr)
\cap\supp\,\mu\big)  = \dim(\supp\,\mu)$. Following an approach developed recently by Fishman
\cite{Fishman}, it seems possible to strengthen this result; in particular, one can consider a modified
Schmidt game played on $E = \supp\,\mu$, with $\mu$ as above, and prove that the intersection of $E$ with $ \BA(\vr)$
is a winning set of this game.

\subsection{Schmidt's Conjecture}\name{sec: schmidt} Finally we would like to mention 
 a  question posed by W.\ Schmidt \cite{Schmidt:open} in 1982:
is it true that for $\vr\ne \vr'$, the intersection of $\BA(\vr)$ and $\BA(\vr')$ is nonempty? Schmidt
conjectured that the answer is affirmative in the special case $n = 2$, $\vr = (\frac13, \frac23)$
and $\vr' = (\frac23, \frac13)$, pointing out that disproving his conjecture would amount
to proving Littlewood's Conjecture (see  \cite{EKL} for its statement, history and recent
developments). 
Unfortunately, the results of the 
present paper do not give rise to any progress related to Schmidt's Conjecture. 
Indeed, each of the weight vectors $\vr$  comes with its own set of rules for the
corresponding modified Schmidt game, and there are no reasons to believe that 
winning sets of different games must have nonempty intersection. One can also observe
that $\BA(\frac23, \frac13) = {f}\big(\BA(\frac13, \frac23)\big)$ where ${f}$ is a reflection 
of $\R^2$ around the line $y = x$. This reflection however does not commute with $\mathcal{F}^{(1/3,2/3)}$, hence Theorem \ref{thm: precise} cannot be used to conclude\footnote{Recently a solution to the conjecture was announced by D.\ Badziahin, A.\ Pollington and S.\ Velani.} that ${f}\big(\BA(\frac13, \frac23)\big)$ is a winning set of the MSG induced by $\mathcal{F}^{(1/3,2/3)}$. \commargin{Should we mention the announcement of 
Pollington et al? I decided to add a footnote...}



\end{document}

\ignore{I guess there could be two approaches to proving this.
One is applying the exponential map to the proof that we had for $\R^n$, and another
using tessellations, that is, open sets $V$ such that their right translates cover $H$ in
an essentially disjoint way, that is generalization to nilpotent groups
of the cubic grid of $\R^n$. See \cite[Proposition 2.5]{K}. Any suggestions? I am skipping
the proof for now. }\ignore{
Let $\cd$ be as in \equ{def dt}. 
Since conditions (MSG2) and (MSG3) are immediate, we only need to
verify (MSG1). 
Denote $R = \diam(D_0),$ let $I_0
= [-R/2,R/2]^n$, and let $B_1 \subset B_2 \subset D_0$, where $B_1,
B_2$ are 
concentric balls of radius $\vre, 2\vre$. Let $C = \mu(D_0)/\mu(I_0)$
and let $s_0$ be such that
 the diameter of $F_{-t}(I_0)$ is less than $\vre$ for any $t >
s_0$. 
Given $\A >a_*$ let 
$$c =\min \left\{
\frac{\mu(B_1) e^{-\delta \A}}{\mu(I_0)} ,
e^{-\delta(\A+s_0)} \right\}.$$
Given $\B>a_*, \, s \geq t_*$, and any $D \in \cd_s$, we need to find a
collection of essentially disjoint $D'_1, \ldots, D'_N \in \cd_{s+\B}$
which are contained in $D$, such that for any $D_1, \ldots, D_N$ with
$D_i \subset D'_i$ and $D_i \in \cd_{s+\A}$, $\mu\left(\bigcup D_i
\right) \geq c \mu (D)$. By applying $F_s$ and translating we may
assume that $s=0$ and $D =D_0$.  

We consider two cases. If $\B \leq
s_0$ then by the definition of $a_*$ there is at least one $D'_1
\subset D_0, \, D'_1 \in \cd_{\B}$ and we can take $N=1$ and any $D_1
\subset D'_1, \, D_1 \in \cd_{\A+\B}$ and find 
$$\mu\left(\bigcup D_i \right) = \mu(D_1) =  e^{-\delta (\A + \B)}
\mu(D_0) \geq c \mu(D_0).$$ 

If $\B > s_0$ consider a collection $\{I_j : j \in \N\}$ of essentially disjoint
translates of $I_0$ covering $\R^n$. Renumbering assume
$$\{1, \ldots, N\} = \left \{j : F_{-\B}(I_j) \subset B_2 \right \}.$$ 
For $i =1, \ldots, N$ 
there is an element $D'_i
\in \cd_{\B}$ which is contained in $F_{-\B}(I_i)$, and 
$$\frac{\mu\left(D'_i \right)}{\mu\left(F_{-\B}(I_i) \right)} =
\frac{\mu\left(D_0 \right)}{\mu\left(I_0 \right)}.$$  
Our choice of
$s_0$ ensures that $B_1 \subset \bigcup_1^N F_{-\B}(I_i)$. This
implies that for any $D_i \subset D'_i, \, D_i \in \cd_{\A+\B}$, we
have 
\[
\begin{split}
\mu\left(\bigcup D_i\right) & = \sum \mu(D_i) = e^{-\delta \A} \sum
\mu(D'_i)  \\ 
&   \frac{\mu(D_0)}{\mu(I_0)} e^{-\delta \A}  \sum \mu\left(F_{-\B}(I_i) \right) \geq 
\frac{\mu(B_1) \mu(D_0)e^{-\delta \A}}{\mu(I_0)} \geq c \mu(D_0).
\end{split}
\]}

\begin{itemize}



\item Observe that the generality of the set-up of this paper  and some remarks made 
in \cite[\S\S5.3--5.5]{KTV} allow  to play games on other topological groups admitting
contracting automorphisms, in particular vector spaces over 
local fields $k$ other than $\R$ and their products, 
and thus to apply the technology to weighted badly 
approximable vectors in $k^n$, or possibly to the $S$-arithmetic set-up.

\item Mention Schmidt's conjecture and our inability to say anything about it, and how it is related
to the commuting condition of the transformations in  Theorem \ref{thm: precise}.
\end{itemize}